\numberwithin{equation}{section}
\newtheorem{thrm}{Theorem}[section]
\newtheorem{lemma}[thrm]{Lemma}
\newtheorem{cor}[thrm]{Corollary}
\newtheorem{dfn}[thrm]{Definition}
\newtheorem{rmrk}[thrm]{Remark}
\newtheorem{conv}[thrm]{Convention}
\newcommand{\QH}{\boldsymbol {G\,(\mathbb{H})}}
\newcommand{\dxa}[1]{\frac {\partial {#1}} {\partial x_{\alpha}} }
\newcommand{\dta}[1]{\frac{\partial {#1}}{\partial t_{\alpha}}}
\newcommand{\dya}[1]{\frac{\partial {#1}}{\partial y_{\alpha}}}
\newcommand{\dza}[1]{\frac{\partial {#1}}{\partial z_{\alpha}}}
\newcommand{\dx}[1]{\frac{\partial {#1}}{\partial x}}
\newcommand{\dy}[1]{\frac{\partial {#1}}{\partial y}}
\newcommand{\dz}[1]{\frac{\partial {#1}}{\partial z}}
\newcommand{\LieQ}{\mathcal{L}_Q\, }
\begin{document}

\begin{abstract}
{\ A complete solution to the quaternionic contact Yamabe equation
on the qc sphere of dimension $4n+3$ as well as on the
quaternionic Heisenberg group is given. A uniqueness theorem for
the qc Yamabe problem in a compact locally 3-Sasakian manifold is
shown.}
\end{abstract}

\keywords{Yamabe equation, quaternionic contact structures,
Einstein structures, divergence formula} \subjclass{58G30, 53C17}
\title[Solution of the qc Yamabe equation on the 3-Sasakian sphere]
{Solution of the qc Yamabe equation on a 3-Sasakian manifold and
the quaternionic Heisenberg group}
\date{\today}
\author{Stefan Ivanov}
\address[Stefan Ivanov]{University of Sofia, Faculty of Mathematics and
Informatics, blvd. James Bourchier 5, 1164, Sofia, Bulgaria, \&
Institute of Mathematics and Informatics, Bulgarian Academy of
Sciences}
\address{and Department of Mathematics,
University of Pennsylvania, DRL 209 South 33rd Street
Philadelphia, PA 19104-6395 } \email{ivanovsp@fmi.uni-sofia.bg}
\author{Ivan Minchev}
\address[Ivan Minchev]{University
of Sofia, Faculty of Mathematics and Informatics, blvd. James Bourchier 5, 1164 Sofia, Bulgaria;
Department of Mathematics and Statistics, Masaryk University, Kotlarska 2, 61137 Brno,
Czech Republic}
\email{minchev@fmi.uni-sofia.bg}
\author{Dimiter Vassilev}
\address[Dimiter Vassilev]{ Department of Mathematics and Statistics\\
University of New Mexico\\
Albuquerque, New Mexico, 87131-0001}
\email{vassilev@unm.edu}
\maketitle
\tableofcontents


\setcounter{tocdepth}{2}

\section{Introduction}
It is well known that the solution of the Yamabe problem on a
compact Riemannian manifold is unique in the case of negative or
vanishing scalar curvature. The proof of these results, which rely
on the maximum principle, extend readily to sub-Riemannian
settings such as the CR and quaternionic contact (abbr. qc) Yamabe
problems due to the sub-ellipticity of the involved operators. The
positive (scalar curvature) case is of continued interest since it
presents considerable difficulties due to the possible
non-uniqueness. The most important positive case in each of these
geometries is given by the corresponding round sphere due to its
role in the general existence theorem and also because of its
connection with the corresponding $L^2$ Sobolev type embedding
inequality. Through the corresponding Cayley transforms, the
sphere cases are equivalent to the problems of finding all
solutions to the respective Yamabe equation on the flat models
given by Euclidean space or Heisenberg groups. The Riemannian and
CR sphere cases were settled in \cite{Ob} and \cite{JL3}. It
should be noted  that the Euclidean case can be handled
alternatively by a reduction to a radially symmetric solution
\cite{GNN} and \cite{Ta}. Furthermore, \cite{Ob} established a
uniqueness result in every conformal class of an Einstein metric.
In this paper we solve the qc Yamabe problem on the $4n+3$
dimensional round sphere and quaternionic Heisenberg group and
establish a uniqueness result in every qc-conformal class
containing a 3-Sasakain metric.

We continue by giving a brief background and the statements of our results.
It is well known that the sphere at infinity of a any non-compact
symmetric space $M$ of rank one carries a natural
Carnot-Carath\'eodory structure, see \cite{M,P}. A quaternionic
contact (qc) structure, \cite{Biq1,Biq2}, appears naturally as the
conformal boundary at infinity of the quaternionic hyperbolic
space. {Following Biquard, a quaternionic contact structure
(\emph{qc structure}) on a real (4n+3)-dimensional manifold $M$ is
a codimension three distribution $H$ (\emph{the horizontal distribution}) locally given as the kernel of a $%
\mathbb{R}^3$-valued one-form $\eta=(\eta_1,\eta_2,\eta_3)$, such
that, the three two-forms $d\eta_i|_H$ are the fundamental forms
of a quaternionic Hermitian structure on $H$.
The 1-form $\eta$
is determined up to a conformal factor and the action of $SO(3)$ on $\mathbb{R}^3$, and therefore $%
H$ is equipped with a conformal class $[g]$ of quaternionic
Hermitian metrics. To every metric in the fixed conformal class
one can associate a linear connection with torsion preserving the
qc structure, see \cite{Biq1}, which is  called the Biquard
connection. } For a fixed metric in the conformal class of metrics
on the horizontal space one associates the horizontal Ricci-type
tensor of the Biquard connection, which is called the qc Ricci
tensor. This is a symmetric tensor \cite{Biq1} whose trace-free
part is determined by the torsion endomorphism of the Biquard
connection \cite{IMV} while the trace part is determined by the
scalar curvature of the qc-Ricci tensor, called the qc-scalar
curvature. It was shown in \cite{IMV} that the torsion
endomorphism of the Biquard connection is completely determined by
the trace-free part of the horizontal Ricci tensor whose vanishing
defines the class of qc-Einstein manifolds. A basic example of a
qc manifold
is  a 3-Sasakian space which can be defined as a $(4n+3)$%
-dimensional Riemannian manifold whose Riemannian cone is a
hyperK\"ahler manifold and the qc structure is induced from that
hyperK\"ahler structure. It was shown in \cite{IMV,IMV4} that the qc-Einstein manifolds
of positive qc-scalar curvature are exactly the locally 3-Sasakian manifolds, up to a multiplication with a constant factor and a $SO(3)$%
-matrix. In particular, every 3-Sasakian manifold has vanishing
torsion endomorphism and  is a qc-Einstein manifold.

The quaternionic contact Yamabe problem on a compact qc manifold
$M$ is the problem  of finding a metric  $\bar g\in [g]$
on $H$ for which the qc-scalar curvature is constant. A natural
question is to determine the possible uniqueness or non-uniqueness
of such qc-Yamabe metrics.

The question reduces to the solvability of the quaternionic
contact (qc) Yamabe equation \eqref{e:conf change
scalar curv}. Taking the conformal factor in the form $\bar\eta=u^{4/(Q-2)}%
\eta$, $Q=4n+6$, turns \eqref{e:conf change scalar curv} into the
equation
\begin{equation*}
\mathcal{L} u\ \equiv\ 4\frac {Q+2}{Q-2}\ \triangle u -\ u\, Scal
\ =\ - \ u^{2^*-1}\ \overline{Scal},
\end{equation*}
where $\triangle $ is the horizontal sub-Laplacian, $\triangle h\
=\ tr^g(\nabla^2h)$,  $Scal$ and $\overline{Scal}$ are the
qc-scalar curvatures
correspondingly of $(M,\, \eta)$ and $(M, \, \bar\eta)$, and $2^* = \frac {2Q%
}{Q-2},$ with $Q=4n+6$--the homogeneous dimension.

Another motivation for studying the qc Yamabe equation comes from
its connection with the determination  of the norm and extremals
in the $L^2$ Folland-Stein \cite{FS}
Sobolev-type embedding on the quaternionic Heisenberg group  $\boldsymbol{%
G\,(\mathbb{H})}$, \cite{GV}, \cite{Va1},  \cite{Va2} and
completed in \cite{IMV2}. The qc Yamabe equation is essentially
the Euler-Lagrange equation of the extremals for the $L^2$ case of the
Folland-Stein inequality \cite{FS} on the quaternionic Heisenberg group $\boldsymbol{%
G\,(\mathbb{H})}$.

{On a compact quaternionic contact manifold $M$ with a fixed
conformal class $[\eta]$ the qc Yamabe equation characterizes the
non-negative extremals of the qc Yamabe functional defined by
\begin{equation*}
\Upsilon (u)\ =\ \int_M\Bigl(4\frac {Q+2}{Q-2}\ \lvert \nabla u
\rvert^2\ +\ {Scal}\, u^2\Bigr) dv_g,\qquad \int_M u^{2^*}\,
dv_g \ =\ 1, \ u>0.
\end{equation*}
Here $dv_g$ denotes the Riemannian volume form of the
Riemannian metric
on $M$ extending in a natural way the horizontal metric associated to $\eta$. Considering $M$ equipped with a fixed qc structure, hence, a
conformal class $[\eta]$,} the Yamabe constant is defined as the
infimum
\begin{equation*}
\lambda(M)\ \equiv \ \lambda(M, [\eta])\ =\ \inf \{ \Upsilon (u)
:\ \int_M u^{2^*}\, dv_g \ =\ 1, \ u>0\}.
\end{equation*}
The main result of \cite{Wei} is that the qc Yamabe equation has a
solution on a compact qc manifold provided
$\lambda(M)<\lambda(S^{4n+3})$, where $S^{4n+3}$ is the standard
unit sphere in the  quaternionic space $\mathbb H^n$.

In this paper we consider the qc Yamabe problem on the {\  unit $(4n+3)$%
-dimensional sphere in $\mathbb H^n$. The standard 3-Sasaki
structure on the sphere $\tilde\eta$ has a constant qc-scalar
curvature $\widetilde{\text{Scal}}=16n(n+2)$ and vanishing
trace-free part of its qc-Ricci tensor, i.e., it is a qc-Einstein
space. The images under conformal quaternionic contact
automorphisms are again qc-Einstein structures  and, in
particular, have constant qc-scalar curvature. In \cite{IMV} we
conjectured that these are the only solutions to the Yamabe
problem on the quaternionic sphere and proved it in dimension
seven in \cite{IMV1}. One of the main goals of this paper is to
prove  this conjecture in full generality.
\begin{thrm}
\label{main2} Let $\tilde\eta=\frac{1}{2h}\eta$ be a qc conformal
transformation of the standard qc-structure $\tilde\eta$ on a
3-Sasakian sphere of dimension $4n+3$. If $\eta$ has constant
qc-scalar curvature, then up to a multiplicative constant $\eta$
is obtained from $\tilde\eta$ by a conformal quaternionic contact
automorphism.
\end{thrm}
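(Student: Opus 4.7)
The plan is to adapt Obata's uniqueness argument in the conformal class of an Einstein metric to the sub-Riemannian qc setting. Writing the qc conformal change as $\tilde\eta=(2h)^{-1}\eta$ with $h>0$ on $S^{4n+3}$, and using that $\tilde\eta$ is qc-Einstein with constant positive qc-scalar curvature $16n(n+2)$, the hypothesis that $\eta$ also has constant qc-scalar curvature translates, via the qc Yamabe equation, into a nonlinear sub-elliptic PDE for $h$ on the 3-Sasakian sphere. The formula of \cite{IMV} expressing the torsion endomorphism of the Biquard connection of $\eta$ in terms of the horizontal Hessian and vertical derivatives of $h$ taken with the Biquard connection of $\tilde\eta$ shows that the torsion of $\eta$ vanishes if and only if a specific symmetric, trace-free combination of these derivatives vanishes. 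The strategy is to prove that the constant qc-scalar curvature hypothesis already forces this rigidity, so that $\eta$ is qc-Einstein; by the classification of \cite{IMV,IMV4}, $\eta$ is then, up to a positive constant, locally 3-Sasakian, and the conclusion follows from classifying the 3-Sasakian representatives inside a single qc conformal class.

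The core technical step is to produce a divergence identity on $(S^{4n+3},\tilde\eta)$ of the schematic form
\begin{equation*}
\nabla_a P^a \; = \; \sum_j \lvert T_j(h) \rvert^2,
\end{equation*}
where the left-hand side is the Biquard divergence of a horizontal vector field $P$ built polynomially from $h$ and its first and second horizontal covariant derivatives, while the right-hand side is a sum of non-negative quadratic expressions $T_j(h)$ that include the torsion components of the Biquard connection of $\eta$ together with the rigidity combination isolated above. Integrating against the Riemannian volume $dv_g$ over the compact sphere and applying Stokes' theorem, the left-hand side drops out and each $T_j(h)$ must vanish identically. In particular, the torsion endomorphism of the Biquard connection of $\eta$ vanishes, so $\eta$ is qc-Einstein with constant positive qc-scalar curvature, hence locally 3-Sasakian by \cite{IMV,IMV4}, and globally 3-Sasakian on the simply connected sphere $S^{4n+3}$ up to multiplication by a constant and an $SO(3)$ rotation.

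To finish, I would classify 3-Sasakian structures inside a single qc conformal class. The remaining vanishing conditions produced by the divergence identity force $h$ into an explicit finite-parameter family. Transferring via the quaternionic Cayley transform to the quaternionic Heisenberg group $\QH$, the structure $\tilde\eta$ becomes the standard flat qc structure and the admissible conformal factors become precisely the orbit of the constant function under left translations and quaternionic dilations on $\QH$, together with the $SO(3)$ action. Pulling back yields the group of conformal quaternionic contact automorphisms of $S^{4n+3}$ and completes the argument.

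The principal obstacle is the construction of the divergence identity and the choice of the auxiliary vector field $P$. The Biquard connection carries non-trivial torsion mixing horizontal and vertical directions, and a direct Bochner computation for $\lvert \nabla h \rvert^2$ or $\lvert \nabla^2 h \rvert^2$ yields curvature and torsion terms of indefinite sign. The input from the qc-Einstein hypothesis on $\tilde\eta$ — vanishing torsion endomorphism and prescribed horizontal qc-Ricci — is what is used to absorb these indefinite terms into the squares on the right. Identifying this tailor-made divergence identity, analogous in spirit but considerably more intricate than the Jerison-Lee identity in the CR setting \cite{JL3}, is the technical heart of the argument.
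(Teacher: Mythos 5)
Your outline follows exactly the route the paper takes: a Jerison--Lee type divergence identity on the compact qc manifold whose right-hand side is non-negative and contains the torsion of the Biquard connection of $\eta$, integration via the sub-Riemannian divergence theorem to conclude that $\eta$ is qc-Einstein (hence locally 3-Sasakian by \cite{IMV,IMV4}), and then the classification of the qc-Einstein representatives in the conformal class of the standard structure (equivalently, via the Cayley transform, the explicit family of conformal factors on $\QH$, which is \cite[Theorem 1.2]{IMV}). So the strategy is right and matches the paper.

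The genuine gap is that the divergence identity is asserted, not produced, and it is precisely the theorem being proved here: everything else in your argument is either known (\cite{IMV}, \cite{IMV4}, the divergence theorem of \cite[Proposition 8.1]{IMV}) or routine. Two points make this more than a computational afterthought. First, the right-hand side one actually obtains is not a sum of squares $\sum_j\lvert T_j(h)\rvert^2$; it has the form $(\tfrac12+h)(\lvert T^0\rvert^2+\lvert\mathbf{E}\rvert^2)+2h\lvert\mathbb{D}+\mathbb{E}\rvert^2+h\langle QV,V\rangle$, where $V=(E,D_1,D_2,D_3,A_1,A_2,A_3)$ collects seven one-forms built from the torsion and the vertical derivatives of $h$, and $Q$ is a $7\times 7$ matrix whose positive definiteness must be arranged by tuning the free coefficients in the candidate vector field (the choices $c=4$ and $-\tfrac{10}{3}$ in front of the $\sum_s dh(\xi_s)I_sA_s$ and $\sum_s dh(\xi_s)I_sA$ terms) and then verified by computing its eigenvalues. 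There is no a priori reason a manifestly non-negative combination exists, and for $n>1$ the extra torsion component $U$ (absent in dimension seven) is exactly what obstructed the earlier argument of \cite{IMV1}. Second, the vector field $P$ is not polynomial in $h$ and its horizontal derivatives alone: it requires the specific weight $f=\tfrac12+h+\tfrac14h^{-1}\lvert\nabla h\rvert^2$ multiplying $D+E$, plus the vertical-derivative terms $dh(\xi_s)I_sE$, $dh(\xi_s)F_s$, $dh(\xi_s)I_sA_s$, $dh(\xi_s)I_sA$. Without exhibiting this identity (or an equivalent one), the proof does not close; naming it as ``the principal obstacle'' is accurate but leaves the theorem unproved.
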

We note that Theorem~\ref{main2} together with the results of
\cite{IMV} allows the determination of \emph{all} solutions of the
qc Yamabe problem on the sphere and on the quaternionic Heisenberg
group $\QH$. In fact, as a consequence of Theorem~\ref{main2}, we
obtain here that all solutions to the qc Yamabe equation are given
by the functions which realize the equality case of the $L^2$
Folland-Stein inequality found in \cite{IMV2} with the help
of the center of mass technique
developed for the CR case in \cite{FL} and \cite{BFM}.

Recall that the quaternionic Heisenberg group $\QH$ of homogeneous
dimension $Q=4n+6$ is given by
$\QH=\mathbb{H}^n\times\text{Im}\mathbb{H}$, $\quad
(q=(t^a,x^a,y^a,z^a)\in \mathbb H^n,\omega=(x,y,z)\in
\text{Im}\mathbb{H})$ with the group low
$$
(q_o, \omega_o)\circ(q, \omega)\ =\ (q_o\ +\ q, \omega\ +\
\omega_o\ + \ 2\ \text {Im}\
q_o\, \bar q). $$ 
The "\emph{standard}" qc contact form in quaternion variables is $
\tilde\Theta= (\tilde\Theta_1,\ \tilde\Theta_2, \ \tilde\Theta_3)=
\frac 12\ (d\omega - q \cdot d\bar q + dq\, \cdot\bar q) $.
The corresponding sub-Laplacian $
\triangle_{\tilde\Theta} u=\sum_{a=1}^n \left (T_{\alpha}^2u+
X_{\alpha}^2u+Y_{\alpha}^2u+Z_{\alpha}^2u \right ), $ where
$T_a,X_a,Y_a,Z_a$ denote the left-invariant horizontal vector
fields on $\QH$.
Theorem \ref{main2} shows, in particular, the following
\begin{cor}
If $\Phi$ satisfies the qc Yamabe equation on  the quaternionic
Heisenberg group $\QH$,
\begin{equation*}
\frac {4(Q+2)}{Q-2}\triangle_{\tilde\Theta} \Phi = -S_{\Theta}\,
\Phi^{2^*-1},
\end{equation*}
for some constant $S_{\Theta}$, then up to a left translation the
function $\Phi=(2h)^{-(Q-2)/4}$ and $h$ is given by
\begin{equation}\label{e:Liouville conf factor}
h(q,\omega) \ =\ c_0\ \Big [  \big ( \sigma\ +\
 |q+q_0|^2 \big )^2\  +\  |\omega\ +\ \omega_o\ +
\ 2\ \text {Im}\  q_o\, \bar q|^2 \Big ],
\end{equation}
for some fixed $(q_o,\omega_o)\in \QH$ and constants $c_0>0$ and
$\sigma>0$. Furthermore, the qc-scalar curvature of $\Theta$ is
$ 
S_{\Theta}=128n(n+2)c_0\sigma. $ 
\end{cor}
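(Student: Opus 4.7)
The plan is to reduce the corollary to Theorem~\ref{main2} by pulling the Yamabe equation on $\QH$ back to $S^{4n+3}$ via the quaternionic Cayley transform. Let $\mathcal{C}\colon \QH\to S^{4n+3}\setminus\{p_0\}$ denote this transform; it is a qc-conformal diffeomorphism that intertwines the standard contact forms via $\mathcal{C}^*\tilde\eta=(2h_0)^{-1}\tilde\Theta$, where $h_0(q,\omega)=(1+|q|^2)^2+|\omega|^2$. In particular, the standard 3-Sasakian contact form on the sphere corresponds on $\QH$ to the conformal factor $\Phi_0=(2h_0)^{-(Q-2)/4}$, which is exactly the centered normalized instance $c_0=\sigma=1$, $q_o=0$, $\omega_o=0$ of \eqref{e:Liouville conf factor}.

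Given a positive $\Phi$ solving the qc Yamabe equation on $\QH$ with constant $S_\Theta$, the contact form $\bar\Theta=\Phi^{4/(Q-2)}\tilde\Theta$ has constant qc-scalar curvature $S_\Theta$. Transporting by $\mathcal{C}^{-1}$ produces, on $S^{4n+3}\setminus\{p_0\}$, a qc-conformal deformation $\bar\eta$ of $\tilde\eta$ with the same constant qc-scalar curvature, corresponding to the conformal factor $\Phi/\Phi_0$ after the Cayley pull-back. The first technical task is to extend $\bar\eta$ smoothly across $p_0$: under the natural finiteness of Yamabe energy (equivalently $\Phi\in L^{2^*}(\QH)$), the decay of $\Phi$ at infinity on $\QH$, the explicit form of $h_0$, and a removable-singularity argument for subelliptic equations with critical nonlinearity together promote $\bar\eta$ to a smooth qc-conformal deformation of $\tilde\eta$ on all of $S^{4n+3}$ of constant qc-scalar curvature.

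At this point Theorem~\ref{main2} applies: $\bar\eta$ must coincide, up to a positive multiplicative constant, with the image of $\tilde\eta$ under a qc-conformal automorphism $F$ of $(S^{4n+3},[\tilde\eta])$. Conjugating $F$ by $\mathcal{C}$ yields a qc-conformal self-map of $\QH$ (defined off a point corresponding to $p_0$), and the group of such maps is generated by the left translations $L_{(q_o,\omega_o)}$, the dilations $\delta_{\sqrt{\sigma}}$, the $Sp(n)Sp(1)$-action on the horizontal space, and the Kelvin-type inversion fixing $p_0$. Left translations may be absorbed as in the statement, and the $Sp(n)Sp(1)$-action leaves both $|q+q_o|^2$ and $|\omega+\omega_o+2\,\text{Im}(q_o\bar q)|^2$ invariant, so it suffices to track the effect of a dilation followed by a left translation on $h_0$. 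A direct computation using $\delta_\lambda(q,\omega)=(\lambda q,\lambda^2\omega)$ and the group law produces exactly a function of the form $c_0\bigl[(\sigma+|q+q_o|^2)^2+|\omega+\omega_o+2\,\text{Im}(q_o\bar q)|^2\bigr]$, which is \eqref{e:Liouville conf factor}.

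The identity $S_\Theta=128n(n+2)c_0\sigma$ is then a bookkeeping step: substitute $\Phi=(2h)^{-(Q-2)/4}$ for the displayed $h$ into the qc Yamabe equation, use $\widetilde{Scal}=16n(n+2)$ for the standard 3-Sasakian sphere, and track how the constants $c_0$ and $\sigma$ enter the qc-conformal transformation law for the scalar curvature. I expect the principal obstacle to be the removable-singularity step: one must verify, using only the regularity and integrability hypotheses implicit in calling $\Phi$ a solution, that its Cayley pull-back is genuinely smooth at $p_0$, since only then is Theorem~\ref{main2} available.
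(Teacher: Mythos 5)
Your proposal is correct and follows essentially the same route as the paper, which derives the corollary from Theorem~\ref{main2} via the quaternionic Cayley transform and the classification of conformal quaternionic contact automorphisms of the sphere (deferring the transfer details, including the behaviour at the pole, to \cite{IMV,IMV1,IMV2} and \cite{IV3}). Your explicit attention to the removable-singularity step at $p_0$ fills in a point the paper leaves to the cited references rather than departing from its argument.
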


This confirms the Conjecture made after \cite[Theorem 1.1]{GV}. In
\cite[Theorem 1.6]{GV} the above result is proved on all groups of
Iwasawa type, but with the assumption of partial-symmetry of the
solution. Here with a completely different method from \cite{GV}
we show that the symmetry assumption is superfluous. The
corresponding solutions on the 3-Sasakain sphere are obtained via
the Cayley transform, see for example \cite{IMV,IMV1,IMV2},
\cite[Sections 2.3 \& 5.2.1]{IV3} for an account and history.
Finally, it should be observed that the functions
\eqref{e:Liouville conf factor} with $c_0\in \mathbb{R}$ give all
conformal factors for which $\Theta$ is also qc-Einstein. 

We derive Theorem \ref{main2} from a more general result in which we solve the qc Yamabe problem on a  locally 3-Sasakian
compact manifolds.
By the results of \cite{IMV} and \cite{IMV4} a qc-Einstein
manifold is of constant qc-scalar curvature, hence as far as the
qc Yamabe equation is concerned only the uniqueness of solutions
needs to be addressed. As mentioned earlier, the interesting case
is when the qc-scalar curvature is a positive constant, hence we
focus exclusively on the locally 3-Sasakian case.

\begin{thrm}\label{mainth}
Let $(M, \bar\eta)$ be a compact locally 3-Sasakian qc manifold of qc-scalar curvature $16n(n+2)$. If $\eta=2h\bar\eta$ is  qc-conformal to $\bar\eta$ structure which  is also of constant
qc-scalar curvature, then up to a homothety $(M,\eta)$ is locally 3-Sasakian
manifold. Furthermore, the function $h$ is constant unless
$(M,\bar\eta)$ is the unit 3-Sasakian sphere.
\end{thrm}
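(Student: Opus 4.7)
The plan is to follow the quaternionic-contact analogue of Obata's theorem. Since $\bar\eta$ is locally 3-Sasakian, by \cite{IMV,IMV4} it is qc-Einstein: its Biquard torsion endomorphism vanishes and its qc-scalar curvature is the constant $16n(n+2)$. The hypothesis that $\eta=2h\bar\eta$ also has constant qc-scalar curvature becomes, via the conformal change formulas for the qc-Ricci and qc-scalar curvatures, a system of sub-elliptic PDEs for the positive function $h$. The central claim to be established is that the new structure $\eta$ is \emph{also} qc-Einstein; once this is proved, a second application of \cite{IMV,IMV4} yields that $(M,\eta)$ is locally 3-Sasakian up to a homothety.

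To show that $\eta$ is qc-Einstein, I would first expand the qc-conformal transformation law for the torsion endomorphism of the Biquard connection under $\eta = 2h\bar\eta$. Because the torsion of $\bar\eta$ vanishes, the torsion of $\eta$ is expressed purely in terms of the horizontal Hessian of $h$, the first horizontal derivatives of $h$, and the derivatives of $h$ along the three Reeb vector fields of $\bar\eta$. Combined with the scalar-curvature equation (the Yamabe-type PDE for $h$) and the constancy of $\overline{Scal}$, the problem reduces to proving that a specific trace-free tensor $T(h)$—essentially the quaternionic symmetrization of the horizontal Hessian together with a correction involving the Reeb derivatives of $h$—vanishes identically on $M$.

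The key step, and the main obstacle, is the construction of a divergence identity of the form
\begin{equation*}
\int_M \operatorname{div}(V)\,dv_g \ = \ \int_M \Bigl(\,|T(h)|^2 \ + \ R(h)\,\Bigr)\,dv_g,
\end{equation*}
where $V$ is a horizontal vector field built polynomially from $h$ and its first and second derivatives, and $R(h)\geq 0$ on a 3-Sasakian background. The delicate point is that $V$ must be chosen so that the Ricci-type curvature terms produced by commuting horizontal and vertical derivatives collapse, thanks to the 3-Sasakian identities and the Einstein condition on $\bar\eta$, into a manifestly non-negative combination. The left-hand side vanishes by compactness of $M$, and the sign of the right-hand side then forces $T(h)\equiv 0$; this is precisely the qc-Einstein condition for $\eta$, hence $(M,\eta)$ is locally 3-Sasakian up to homothety.

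Finally, once $\eta$ is qc-Einstein, $h$ satisfies a sharply over-determined second-order system: its trace-free horizontal Hessian vanishes, and its Reeb derivatives are controlled by $h$ and its horizontal gradient. On a compact 3-Sasakian manifold this rigid system either forces $h$ to be constant or provides enough infinitesimal qc-conformal automorphisms to imply, by the rigidity of the 3-Sasakian structure, that $M$ is the standard 3-Sasakian sphere $S^{4n+3}$; in the latter case the Cayley transform converts the non-constant solutions into the explicit family \eqref{e:Liouville conf factor}.
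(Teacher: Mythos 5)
Your outline follows the same two-step strategy as the paper, but both of the steps that actually carry the mathematical weight are asserted rather than carried out, so as it stands this is a plan, not a proof.

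For the first step, you correctly reduce the problem to producing a divergence identity whose right-hand side is a non-negative multiple of the torsion norms, and you even flag the choice of the vector field $V$ as ``the main obstacle'' --- but you never choose it. This is precisely where the entire difficulty of the theorem is concentrated. The paper's Theorem~\ref{t:div formulas} requires the specific function $f=\frac12+h+\frac14 h^{-1}|\nabla h|^2$, a vector field built from the tensors $D$, $E$, $F_s$, $I_sA_s$, $I_sA$ with particular coefficients ($4$ and $-\frac{10}{3}$ on the last two blocks), and a verification that the resulting quadratic form in the seven quantities $(E,D_1,D_2,D_3,A_1,A_2,A_3)$ is positive definite. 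For $n>1$ the extra torsion component $U$ and the nonvanishing $[3]$-part of the Ricci tensor make this a genuinely delicate balancing act (this is why the $n=1$ case was settled earlier and the general case was open); there is no a priori reason such a $V$ with $R(h)\ge 0$ exists, and no soft argument produces it. Without exhibiting $V$, the claim that the right-hand side is non-negative is unsupported.

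For the second step, your statement that the over-determined system for $h$ ``either forces $h$ to be constant or provides enough infinitesimal qc-conformal automorphisms to imply, by the rigidity of the 3-Sasakian structure, that $M$ is the sphere'' does not identify the mechanism, and ``rigidity'' is not the right invocation: a compact locally 3-Sasakian manifold other than the sphere is not distinguished by any rigidity of its qc structure here. The actual argument is an eigenvalue argument: one shows that $Q=\frac12\nabla f+\sum_s dh(\xi_s)\xi_s$ is a qc vector field, that for any qc vector field $\Delta(\nabla^*Q_H)=-\frac{n}{2(n+2)}Q(\mathrm{Scal})-\frac{\mathrm{Scal}}{4(n+2)}\nabla^*Q_H$, so that with $\mathrm{Scal}=16n(n+2)$ constant the function $\triangle f$ is either identically zero (whence $f$ is constant and $h\equiv\frac12$ by the Yamabe equation at extrema of $h$) or an eigenfunction of the sub-Laplacian with eigenvalue $-4n$; one then invokes the sharp Lichnerowicz--Obata first-eigenvalue characterization of the 3-Sasakian sphere among compact locally 3-Sasakian manifolds (\cite{IPV1,IPV2}). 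Your sketch contains neither the construction of this qc vector field nor the eigenvalue identity nor the reference to the sharp eigenvalue theorem, all of which are needed to distinguish the sphere from a general 3-Sasakian base.
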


The proof of Theorem \ref{mainth} consists of two steps. The first step is a divergence formula
Theorem~\ref{t:div formulas} which shows that if $\bar\eta$ is of constant qc-curvature and is qc-conformal to a locally
3-Sasakian manifold, then $\bar\eta$ is also a locally 3-Sasakian manifold.
The general idea to search for such
a divergence formula goes back to Obata \cite{Ob} where the
corresponding result on a Riemannian manifold was proved for a
conformal transformation of an Einstein space.  However, our
result is motivated by the (sub-Riemannian)  CR case where a
formula of this type was introduced in the ground-breaking paper
of Jerison and Lee \cite{JL3}.
As far as the qc case is concerned in \cite[Theorem 1.2]{IMV} a
weaker results was shown, namely Theorem~\ref{mainth} holds
provided the vertical space of $\eta$ is integrable.  In dimension
seven, the $n=1$ case, this assumption was removed in
\cite[Theorem 1.2]{IMV1} where the result was established with the
help of a suitable divergence formula. The general case $n>1$
treated here  presents new difficulties due to the extra non-zero
torsion terms that appear in the higher dimensions,   which
complicate considerably the search of a suitable divergence
formula. In the seven dimensional case the [3]-component of the
traceless qc-Ricci tensor vanishes which decreases the number of
torsion components.

The proof of the second part of Theorem~\ref{mainth} builds on
ideas of Obata in the Riemannian case, who used that the gradient
of the (suitably taken) conformal factor is a conformal vector
field and the characterization of the unit sphere through its
first eigenvalue of the Laplacian among all Einstein manifolds. We
show a similar, although a more complicated relation between the
conformal factor and the existence of an infinitesimal qc
automorphism (qc vector field). Our divergence formula found in
Theorem~\ref{t:div formulas} involves a smooth function $f$, c.f.
\eqref{e:f}, expressed in terms of the conformal factor  and its
horizontal gradient. Remarkably, we found that the horizontal
gradient of $f$ is precisely the horizontal part of the  qc vector
field mentioned above and the sub-Laplacian of $f$ is an
eigenfuction of the sub-Laplacian with the smallest possible
eigenvalue $-4n$ thus showing a geometric nature of $f$ (cf
Remark~\ref{mys}). Then we use the characterization of the
3-Sasakian sphere by its first eigenvalue of the sub-Laplacian
among all locally 3-Sasakian manfolds established in
\cite[Theorem~1.2]{IPV1} for $(n>1)$ and in
 \cite[Corollary~1.2]{IPV2} for $n=1$.

\begin{rmrk}
Remarkably, a similar arguments also work in the CR case
describing the geometric nature of the mysterious function in the
Jerison-Lee's divergence formula in \cite{JL3}. Indeed, the
CR-Laplacian  of the real  part of the function $f$ defined in
\cite[Proposition~3.1]{JL3} turns out to be  an eigenfuction of
the CR-Laplacian with the smallest possible eigenvalue $-2n$ thus
showing a geometric nature of the real part of $f$.
\end{rmrk}

\begin{conv} \label{conven}
We use the following
\begin{itemize}

\item[1.] $\{e_1,\dots,e_{4n}\}$ denotes an orthonormal basis of
the horizontal space $H$; \item[2.] The capital letters X,Y,Z...
denote horizontal vectors, $X,Y,Z...\in H$.

\item[3.] The summation convention over repeated vectors from the basis $%
\{e_1,\dots,e_{4n}\}$ will be used. For example, for a
(0,4)-tensor $P$,  $k=P(e_b,e_a,e_a,e_b)$ means
$k=\sum_{a,b=1}^{4n}P(e_b,e_a,e_a,e_b).$

\item[4.] The triple $(i,j,k)$ denotes any cyclic permutation of
$(1,2,3)$.
\end{itemize}
\end{conv}

\textbf{Acknowledgements}  S.Ivanov is visiting University of
Pennsylvania, Philadelphia. S.I. thanks UPenn for providing the
support and an excellent research environment during the whole
stages of the paper. S.I. and I.M. are partially supported by
Contract DFNI I02/4/12.12.2014 and  Contract 168/2014 with the
Sofia University "St.Kl.Ohridski". I.M. is  supported by a SoMoPro II Fellowship which is co-funded  by
the European Commission\footnote{This article reflects only the author's views and the EU is not liable for any use that may be made of the information contained therein.} from \lq\lq{}People\rq\rq{} specific programme (Marie Curie Actions) within the EU
Seventh Framework Programme on the basis of the grant agreement REA No. 291782. It is further co-financed by the South-Moravian Region. D.V. was partially supported by Simons Foundation grant \#279381.


\section{Quaternionic contact manifolds}

\label{s:review} In this section we will briefly review the basic notions of
quaternionic contact geometry and recall some results from \cite{Biq1} and
\cite{IMV}, see \cite{IV3} for a more leisurely exposition.

A quaternionic contact (qc) manifold $(M, \eta,g, \mathbb{Q})$ is a $4n+3$%
-dimensional manifold $M$ with a codimension three distribution
$H$ locally given as the kernel of a 1-form
$\eta=(\eta_1,\eta_2,\eta_3)$ with values in $\mathbb{R}^3$. In
addition $H$ has an $Sp(n)Sp(1)$ structure, that is, it is
equipped with a Riemannian metric $g$ and a rank-three bundle
$\mathbb{Q}$ consisting of endomorphisms of $H$ locally generated
by three almost complex structures $I_1,I_2,I_3$ on $H$ satisfying
the identities of the imaginary unit quaternions,
$I_1I_2=-I_2I_1=I_3, \quad I_1I_2I_3=-id_{|_H}$ which are
hermitian compatible with the metric $g(I_s.,I_s.)=g(.,.)$ and the
following contact condition holds $$\qquad 2g(I_sX,Y)\ =\
d\eta_s(X,Y).$$


A special phenomena, noted in \cite{Biq1}, is that the contact
form $\eta$ determines the quaternionic structure and the metric
on the horizontal distribution in a unique way.

The transformations preserving a given quaternionic contact
structure $\eta$, i.e., $\bar\eta=\mu\Psi\eta$ for a positive
smooth function $\mu$ and an $SO(3)$ matrix $\Psi$ with smooth
functions as entries are called \emph{quaternionic contact
conformal (qc-conformal) transformations}.  If the function $\mu$
is constant $\bar\eta$ is called qc-homothetic to $\eta$. The qc
conformal curvature tensor $W^{qc}$, introduced in \cite{IV}, is
the obstruction for a qc structure to be locally qc conformal to
the standard 3-Sasakian structure on the $(4n+3)$-dimensional
sphere \cite{IMV,IV}.
\begin{dfn}
\label{d:3-ctct auto} A diffeomorphism $\phi$ of a QC manifold $(M,[g],%
\mathbb{Q})$ is called a \emph{conformal quaternionic contact
automorphism (conformal qc-automorphism)} if $\phi$ preserves the
QC structure, i.e.
\begin{equation*}
\phi^*\eta=\mu\Phi\cdot\eta,
\end{equation*}
for some positive smooth function $\mu$ and some matrix $\Phi\in
SO(3)$ with smooth functions as entries and
$\eta=(\eta_1,\eta_2,\eta_3)^t$ is a local 1-form considered as a
column vector of three one forms as entries.
\end{dfn}

On a qc manifold with a fixed metric $g$ on $H$ there exists a
canonical
connection defined first by O. Biquard in \cite{Biq1} when the dimension $(4n+3)>7$, and in \cite%
{D} for the 7-dimensional case. Biquard showed that there  is a
unique connection $\nabla$ with torsion $T$  and a unique
supplementary subspace $V$ to $H$ in $TM$, such that:
\begin{enumerate}[(i)]
\item $\nabla$ preserves the decomposition $H\oplus V$ and the $
Sp(n)Sp(1)$ structure on $H$, i.e. $\nabla g=0, \nabla\sigma
\in\Gamma( \mathbb{Q})$ for a section
$\sigma\in\Gamma(\mathbb{Q})$, and its torsion on $H$ is given by
$T(X,Y)=-[X,Y]_{|V}$; \item for $\xi\in V$, the endomorphism
$T(\xi,.)_{|H}$ of $H$ lies in $ (sp(n)\oplus sp(1))^{\bot}\subset
gl(4n)$; \item the connection on $V$ is induced by the natural
identification $ \varphi$ of $V$ with the subspace $sp(1)$ of the
endomorphisms of $H$, i.e. $ \nabla\varphi=0$.
\end{enumerate}
This canonical connection is also known as \emph{the Biquard
connection}. When the dimension of $M$ is
at least eleven \cite{Biq1} also described the supplementary distribution $V$%
, which is (locally) generated by the so called Reeb vector fields $%
\{\xi_1,\xi_2,\xi_3\}$ determined by
\begin{equation}  \label{bi1}
\eta_s(\xi_k)=\delta_{sk}, \qquad (\xi_s\lrcorner
d\eta_s)_{|H}=0,\qquad (\xi_s\lrcorner
d\eta_k)_{|H}=-(\xi_k\lrcorner d\eta_s)_{|H},
\end{equation}
where $\lrcorner$ denotes the interior multiplication.
 If the dimension of $%
M $ is seven Duchemin shows in \cite{D} that if we assume, in
addition, the existence of Reeb vector fields as in \eqref{bi1},
then   the Biquard result  holds. Henceforth, by a qc structure in
dimension $7$ we shall mean a qc structure satisfying \eqref{bi1}.

The fundamental 2-forms $\omega_s$ of the quaternionic contact
structure $Q$ are defined by
\begin{equation*}  
2\omega_{s|H}\ =\ \, d\eta_{s|H},\qquad
\xi\lrcorner\omega_s=0,\quad \xi\in V.
\end{equation*}

Notice that equations \eqref{bi1} are invariant under the natural
$SO(3)$
action. Using the triple of Reeb vector fields we extend the metric $g$ on $%
H $ to a metric $h$ on $TM$ by requiring
$span\{\xi_1,\xi_2,\xi_3\}=V\perp H \text{ and }
h(\xi_s,\xi_k)=\delta_{sk}. $  The Riemannian metric $h$ as well
as the Biquard connection do not depend on the action of $SO(3)$
on $V$, but both change  if $\eta$ is multiplied by a conformal
factor \cite{IMV}. Clearly, the Biquard connection preserves the
Riemannian metric on $TM, \nabla h=0$.


The properties of the Biquard connection are encoded in  the
torsion endomorphism $T_{\xi}\in(sp(n)+sp(1))^{\perp}$. We recall
the $Sp(n)Sp(1)$ invariant decomposition. An endomorphism $\Psi$
of $H$ can be decomposed with respect to the quaternionic
structure $(\mathbb{Q},g)$ uniquely into four $Sp(n)$-invariant
parts 
$\Psi=\Psi^{+++}+\Psi^{+--}+\Psi^{-+-}+\Psi^{--+},$ 
where the superscript $+++$
means commuting with all three $I_{i}$, $+--$ indicates commuting with $%
I_{1} $ and anti-commuting with the other two and etc. The two $Sp(n)Sp(1)$%
-invariant components $\Psi_{[3]}=\Psi^{+++}, \quad
\Psi_{[-1]}=\Psi^{+--}+\Psi^{-+-}+\Psi^{--+} $ are determined by
\begin{equation*}
\begin{aligned} \Psi=\Psi_{[3]} \quad \Longleftrightarrow 3\Psi+I_1\Psi
I_1+I_2\Psi I_2+I_3\Psi I_3=0,\\ \Psi=\Psi_{[-1]}\quad
\Longleftrightarrow \Psi-I_1\Psi I_1-I_2\Psi I_2-I_3\Psi I_3=0.
\end{aligned}
\end{equation*}
With a short calculation one sees that the $Sp(n)Sp(1)$-invariant
components are the projections on the eigenspaces of the Casimir
operator $\Upsilon =\ I_1\otimes I_1\ +\ I_2\otimes I_2\ +\
I_3\otimes I_3$ corresponding, respectively, to the eigenvalues
$3$ and $-1$, see \cite{CSal}. If $n=1$ then the space of
symmetric endomorphisms commuting with all $I_s$ is 1-dimensional,
i.e. the [3]-component of any symmetric endomorphism $\Psi$
on $H$ is proportional to the identity, $\Psi_{[3]}=-\frac{tr\Psi}{4}Id_{|H}$%
. Note here that each of the three 2-forms $\omega_s$ belongs to
its [-1]-component, $\omega_s=\omega_{s[-1]}$ and constitute a
basis of the Lie algebra $sp(1)$.

\subsection{The torsion tensor}
Decomposing the endomorphism $T_{\xi }\in (sp(n)+sp(1))^{\perp }$
into its symmetric part $T_{\xi }^{0}$ and skew-symmetric part
$b_{\xi },T_{\xi }=T_{\xi }^{0}+b_{\xi }$, O. Biquard shows in
\cite{Biq1} that the torsion $T_{\xi }$ is completely trace-free,
$tr\,T_{\xi }=tr\,T_{\xi }\circ I_{s}=0$, its symmetric part has
the properties $T_{\xi _{i}}^{0}I_{i}=-I_{i}T_{\xi _{i}}^{0}\quad
I_{2}(T_{\xi _{2}}^{0})^{+--}=I_{1}(T_{\xi _{1}}^{0})^{-+-},\quad
I_{3}(T_{\xi _{3}}^{0})^{-+-}=I_{2}(T_{\xi _{2}}^{0})^{--+},\quad
I_{1}(T_{\xi _{1}}^{0})^{--+}=I_{3}(T_{\xi _{3}}^{0})^{+--}$. The
skew-symmetric part can be represented as $b_{\xi _{i}}=I_{i}u$,
where $u$ is a traceless symmetric (1,1)-tensor on $H$ which
commutes with $I_{1},I_{2},I_{3}$. Therefore we have $T_{\xi
_{i}}=T_{\xi _{i}}^{0}+I_{i}u$. If $n=1$ then the tensor $u$
vanishes identically, $u=0$, and the torsion is a symmetric
tensor, $T_{\xi }=T_{\xi }^{0}$.

\noindent The two $Sp(n)Sp(1)$-invariant trace-free symmetric
2-tensors $T^0(X,Y)= g((T_{\xi_1}^{0}I_1+T_{\xi_2}^{0}I_2+T_{
\xi_3}^{0}I_3)X,Y)$, $U(X,Y) =g(uX,Y)$ on $H$, introduced in
\cite{IMV}, have the properties:
\begin{equation}  \label{propt}
\begin{aligned} T^0(X,Y)+T^0(I_1X,I_1Y)+T^0(I_2X,I_2Y)+T^0(I_3X,I_3Y)=0, \\
U(X,Y)=U(I_1X,I_1Y)=U(I_2X,I_2Y)=U(I_3X,I_3Y). \end{aligned}
\end{equation}
In dimension seven $(n=1)$, the tensor $U$ vanishes identically,
$U=0$.

\noindent These tensors determine completely the torsion endomorphism of the Biquard connection due to
the following identity \cite[Proposition~2.3]{IV} $%
4T^0(\xi_s,I_sX,Y)=T^0(X,Y)-T^0(I_sX,I_sY)$ which implies
\begin{equation*} 
4T(\xi_s,I_sX,Y)=4T^0(\xi_s,I_sX,Y)+4g(I_suI_sX,Y)=
T^0(X,Y)-T^0(I_sX,I_sY)-4U(X,Y).
\end{equation*}

\subsection{The qc-Einstein condition and Bianchi identities}

We explain briefly the consequences of the Bianchi identities and
the notion of qc-Einstein manifold introduced in \cite{IMV} since
it plays a crucial role in solving the Yamabe equation in the
quaternionic sphere (see \cite{IMV1} for dimension seven). For
more details see \cite{IMV}.

Let $R=[\nabla,\nabla]-\nabla_{[\ ,\ ]}$ be the curvature  of the
Biquard connection $\nabla$. The Ricci tensor and the scalar
curvature, called \emph{qc-Ricci tensor} and \emph{qc-scalar
curvature}, respectively, are defined by
\begin{equation*}  \label{e:horizontal ricci}
Ric(X,Y)={g(R(e_a,X)Y,e_a)},  \qquad
Scal=Ric(e_a,e_a)=g(R(e_b,e_a)e_a,e_b).
\end{equation*}
According to \cite{Biq1} the Ricci tensor restricted to $H$ is a
symmetric tensor. If the trace-free part of the qc-Ricci tensor is
zero we call the quaternionic structure \emph{a qc-Einstein
manifold} \cite{IMV}. It is shown in \cite{IMV} that the qc-Ricci
tensor is completely determined by the components of the torsion.
\noindent{\ Theorem~1.3, Theorem~3.12 and Corollary~3.14 in \cite{IMV} imply%
} 
that on a qc manifold $(M^{4n+3},g,\mathbb{Q})$ the qc-Ricci
tensor and the qc-scalar curvature satisfy
\begin{equation*}  \label{sixtyfour}
\begin{aligned} Ric(X,Y) \ & =\ (2n+2)T^0(X,Y)
+(4n+10)U(X,Y)+\frac{Scal}{4n}g(X,Y)\\ Scal\ & =\
-8n(n+2)g(T(\xi_1,\xi_2),\xi_3) \end{aligned}
\end{equation*}
Hence, the qc-Einstein condition is equivalent to the vanishing of
the torsion endomorphism of the Biquard connection and in this
case the qc scalar curvature is constant \cite{IMV,IMV4}. If $Scal
>0$ {\ the latter} holds exactly when the qc-structure is locally
3-Sasakian up to a multiplication by a constant and an
$SO(3)$-matrix with smooth entries.
{
We remind that a (4n+3)-dimensional
Riemannian manifold $(M,g)$ is called 3-Sasakian if the cone metric $%
g_N=t^2g+dt^2$ on $N=M\times \mathbb{R}^+$ is a hyperk\"ahler
metric, namely, it has holonomy contained in $Sp(n+1)$. The
3-Sasakian manifolds are Einstein with positive Riemannian scalar
curvature.}

The following vectors will be important for our considerations,
\begin{equation}  \label{d:A_s}
A_i\ =\ I_i[\xi_j, \xi_k],\qquad A\ = \ A_1\ +\ A_2\ +\ A_3.
\end{equation}
\noindent We  denote with the same letter the corresponding
horizontal 1-form and recall the action of $I_s$ on it,
\begin{equation*}
{A} (X)\ = \ g(I_1[\xi_2,\xi_3]+I_2[\xi_3,\xi_1]+I_3[\xi_1,\xi_2],
X),\quad I_sA(X)=-A(I_sX).
\end{equation*}
The horizontal divergence $\nabla^*P$ of a (0,2)-tensor field $P$
on $M$ with respect to Biquard connection is defined to be the
(0,1)-tensor field $
\nabla^*P(.)=(\nabla_{e_a}P)(e_a,.). $ 
We have from \cite[Theorem 4.8]{IMV} that on a
$(4n+3)$-dimensional QC manifold with constant qc-scalar curvature
the next identities hold
\begin{equation}  \label{div:To}
\nabla^*T^0=(n+2){A}, \qquad \nabla^*U=\frac{1-n}{2}{A}.
\end{equation}

For any smooth function $h$ on a qc manifold with constant qc
scalar curvature the following  formulas are valid \cite[Lemma~4.1
]{IMV1}
\begin{equation}\label{l:div of I_sA}
\begin{aligned} \nabla^*\, \Bigl (\sum_{s=1}^3 dh(\xi_s) I_sA_s\Bigr )\
 =\ \sum_{s=1}^3 \ \nabla dh\,(I_s e_a, \xi_s)A_s(e_a);\\ \nabla^*\, \Bigl (\sum_{s=1}^3 dh(\xi_s) I_sA \Bigr )\
 =\ \sum_{s=1}^3 \ \nabla dh\,(I_s e_a, \xi_s)A(e_a). \end{aligned}
\end{equation}


\subsection{Qc conformal transformations}

\label{s:conf transf}

Let $h$ be a positive smooth function on a qc manifold $(M, \eta)$. Let $%
\bar\eta=\frac{1}{2h}\eta$ be a conformal deformation of the qc structure $%
\eta$. We will denote the objects related to $\bar\eta$ by over-lining the
same object corresponding to $\eta$. Thus,  $d\bar\eta=-\frac{1}{2h^2}%
\,dh\wedge\eta\ +\ \frac{1}{2h\,}d\eta$ and $\bar
g=\frac{1}{2h}g$. The new triple
$\{\bar\xi_1,\bar\xi_2,\bar\xi_3\}$ is determined by the
conditions defining the Reeb vector fields as follows $\bar\xi_s\
=\ 2h\,\xi_s\ +\ I_s\nabla h$, where $\nabla h$ is the horizontal
gradient defined by $g(\nabla h,X)=dh(X)$.  The components of the
torsion tensor transform according to the following formulas from
\cite[Section 5]{IMV}
\begin{equation}  \label{e:U conf change}\begin{split}
\overline T^0(X,Y) \ =\ T^0(X,Y)\ +\ h^{-1}\,[\nabla dh]_{[sym][-1]}(X,Y), \\
\bar U(X,Y) \ =\ U(X,Y)\ +\ (2h)^{-1}[\ {\nabla dh}-2h^{-1}dh\otimes
dh]_{[3][0]}(X,Y),
\end{split}
\end{equation}
where the symmetric part is given by \begin{equation*}
\label{symdh} [\ {\nabla dh}]_{[sym]}(X,Y)\ =\ \ {\nabla dh}(X,Y)\
+ \ \sum_{s=1}^3 dh(\xi_s)\,\omega_s(X,Y)  \end{equation*} {and
$_{[3][0]}$ indicates the trace free part of the [3]-component of
the corresponding tensor. }

In addition, the qc-scalar curvature changes according to the
formula \cite{Biq1}
\begin{equation}  \label{e:conf change scalar curv}
\overline {\text{Scal}}\ =\ 2h\,(\text{Scal})\ -\ 8(n+2)^2\,h^{-1}|\nabla
h|^2\ +\ 8(n+2)\,\triangle h.
\end{equation}

\section{Qc conformal transformations on qc Einstein manifolds}
Throughout this section $h$ is a positive smooth function on a qc
manifold $(M, g, \mathbb{Q})$ {\ with constant qc-scalar curvature
$Scal=16n(n+2)$} and $\bar\eta\ =\ \frac{1}{2h}\, \eta$  is a qc
Einstein structure which is a conformal deformation of the qc
structure $\eta$. We recall some formulas from \cite{IMV1} which
we need here.

First we write the expressions of the 1-forms $A_s, A$ in terms of
$h$ (see \cite[Lemma~ ]{IMV1})
\begin{multline}  \label{e:A_s}
A_i(X)\ =\ -\frac12 h^{-2}dh(X)\ -\ \frac 12h^{-3}\lvert \nabla h
\rvert^2dh(X)  -\ \frac 12 h^{-1}\Bigl (\ {\nabla dh} (I_jX,
\xi_j)\  +\ \ {\nabla dh} (I_kX, \xi_k) \Bigr )\\ +\ \frac 12
h^{-2}\Bigl (dh(\xi_j)\,dh (I_jX)\ +\ dh(\xi_k)\,dh (I_kX) \Bigr )
+\ \frac 14 h^{-2}\Bigl ( \ {\nabla dh} (I_jX, I_j \nabla h)\ +\ \ {\nabla dh%
} (I_kX, I_k \nabla h) \Bigr ).
\end{multline}
Thus, we have also
\begin{multline}  \label{e:A}
A(X)\ =\ -\frac32 h^{-2}dh(X)\ -\ \frac 32h^{-3}\lvert \nabla h \rvert^2dh(X)
\\
-\ h^{-1}\sum_{s=1}^3\ {\nabla dh} (I_sX, \xi_s)\ +\
h^{-2}\sum_{s=1}^3dh(\xi_s)\,dh (I_sX)\ +\ \frac 12 h^{-2}\sum_{s=1}^3\ {%
\nabla dh} (I_sX, I_s \nabla h)\
\end{multline}




Second we consider the following one-forms
\begin{equation}  \label{d:D_s}
\begin{aligned} D_s(X)
=-\frac1{2h}\Big[T^0(X,\nabla h)+T^0(I_sX,I_s\nabla h)\Big]
\end{aligned}
\end{equation}
For simplicity, using the musical isomorphism, we will denote with
$D_1, \, D_2, \, D_3$ the corresponding (horizontal) vector
fields, for example \hspace{3mm} $g(D_1, X)=D_1(X)$. Using
\eqref{propt}, we set
\begin{equation}  \label{d:def of D}
D\ =\ D_1\ +\ D_2\ +\ D_3\ =\ - h^{-1}\,T^{0}(X,\nabla h).
\end{equation}
Setting $\bar T^0=0$ in \eqref{e:U conf change}, we obtain from
equations \eqref{d:D_s}  the expressions (cf. \cite{IMV1} or
\cite{IV})
\begin{equation}  \label{Ds}
\begin{aligned} D_i(X)\ =\ h^{-2}\, dh(\xi_i)\,dh(I_iX)+ \ \frac14h^{-2}\, \bigl [\ \nabla dh\, (X,\nabla h)+\ \nabla dh\, (I_iX,I_i\nabla h)
\\ - \ \nabla dh\, (I_jX,I_j\nabla h)\ -\ \nabla dh\,
(I_kX,I_k\nabla h)\bigr ] .
\end{aligned}
\end{equation}
The equalities \eqref{d:def of D} together with \eqref{Ds} yield
\cite[Lemma~4.2]{IMV1}
\begin{equation}\label{n21}
D(X)\ =\ \frac 14 h^{-2}\Bigl (3\ {\nabla dh} (X, \nabla h)\  -\
\sum_{s=1}^3\ {\nabla dh} (I_sX, I_s\nabla h) \Bigr )\  +\
h^{-2}\sum_{s=1}^3dh(\xi_s)\,dh (I_sX).
\end{equation}
Third, we consider the following one-forms (and corresponding
vectors)
\begin{equation*}  \label{d:F_s}
F_s(X)\ =\ - h^{-1}\, {T^0}(X,I_s\nabla h).
\end{equation*}
\noindent From the definition of $F_i$ and \eqref{d:D_s} we find
\begin{equation}\label{e:F_s by D_s}
F_i(X)\ =\ - h^{-1}{T^0}(X,I_i\nabla h)
=\ -D_i(I_iX)\ +\ D_j(I_i X)\ +\ D_k(I_iX).
\end{equation}
We recall the next divergence
formulas established in \cite[Lemma~4.2, Lemma~4.3]{IMV1} with the
help of the contracted second Bianchi identity \eqref{div:To}.
\begin{equation}\label{divD}
\nabla^*\, D\ =\ \lvert T^0 \rvert^2\ -h^{-1}g(dh,D)\ -\ h^{-1}
(n+2)\,g(dh,A).
\end{equation}
\begin{multline}\label{p:div of F_s}
\nabla^*\, \Bigl (\sum_{s=1}^3 dh(\xi_s) F_s\Bigr )\ =\ \sum_{s=1}^3 \Bigl [%
\ \nabla dh\, (I_se_a,\xi_s)F_s(I_se_a)\Bigr] \\
+ \ h^{-1}\sum_{s=1}^3 \Bigl[dh(\xi_s)dh (I_se_a)D(e_a)\ +(n+2)\,dh(\xi_s)dh
(I_s e_a)\, A(e_a)\Bigr ].
\end{multline}

\section{The divergence formula}
{\ Following is our main technical result.}{\ As mentioned in the
introduction, we were motivated to seek a divergence formula of
this type based on the Riemannian, CR and seven dimensional qc
cases of the considered problem. The main difficulty was to find a
suitable vector field with non-negative divergence containing the
norm of the torsion. The fulfilment of this task was facilitated
by the results of \cite{IMV}. In particular,
similarly to the CR case, but unlike the Riemannian case, we were
not able to achieve a proof based purely on the Bianchi
identities, see \cite[Theorem
4.8]{IMV}.} 

Using $\overline{Scal}=Scal=16n(n+2)$ in the Yamabe equation
\eqref{e:conf change scalar curv} we have
\begin{equation}  \label{n1}
\triangle h=2n-4nh +h^{-1}(n+2)|\nabla h|^2.
\end{equation}
The equation \eqref{e:U conf change}  in the case $\bar T^0=\bar
U=0$ and \eqref{n1} motivate the definition of    the following
symmetric (0,2) tensors


\begin{multline}  \label{n2}
\mathbf{D}(X,Y)=-T^0(X,Y)=\frac{h^{-1}}{4}\Big[3\nabla^2h(X,Y)-%
\sum_{s=1}^3\nabla^2h(I_sX,I_sY)
+4\sum_{s=1}^3dh(\xi_s)\omega_s(X,Y)\Big]
\end{multline}
\begin{multline}  \label{n3}
\mathbf{E}(X,Y)=-2U(X,Y)=\frac{h^{-1}}{4}\Big[\nabla^2h(X,Y)+%
\sum_{s=1}^3\nabla^2h(I_sX,I_sY)\Big] \\
-\frac{2h^{-2}}{4}\Big[dh(X)dh(Y)+\sum_{s=1}^3dh(I_sX)dh(I_sY)\Big] -\frac{h^{-1}}{4}%
\Big(2-4h+h^{-1}|\nabla h|^2\Big)g(X,Y).
\end{multline}
The one form $D$ defined in \eqref{d:def of D} and expressed
in terms of $h$ in  \eqref{n21} satisfies $D(X)=h^{-1}%
\mathbf{D}(X,\nabla h).$

Consider the 1-form $E(X)=h^{-1}\mathbf{E}(X,\nabla h)$. We obtain
from \eqref{n2} and \eqref{n3} the expression
\begin{equation}  \label{n31}
E(X)=\frac{h^{-2}}{4}\Big[\nabla^2h(X,\nabla h)+\sum_{s=1}^3\nabla^2h(I_sX,I_s\nabla h)+%
\Big(-2+4h-3h^{-1}|\nabla h|^2\Big)dh(X)\Big].
\end{equation}
We also define the (0,3)-tensors $\mathbb{D}$ and $\mathbb{E}$ by
\begin{multline}\label{ddd3}
\mathbb{D}(X,Y,Z)=-\frac{h^{-1}}{8}\Big[dh(X)T^0(Y,Z)+dh(Y)T^0(X,Z)\\+\sum_{s=1}^3dh(I_sX)T^0(I_sY,Z)+\sum_{s=1}^3dh(I_sY)T^0(I_sX,Z)\Big]
\end{multline}
\begin{multline}\label{eee3}
\mathbb{E}(X,Y,Z)=\frac{h^{-1}}{8}\Big\{dh(X)\mathbf{E}(Y,Z)+
dh(Y)\mathbf{E}(X,Z)\\+\sum_{s=1}^3dh(I_sX)\mathbf{E}(I_sY,Z)
+\sum_{s=1}^3dh(I_sY)\mathbf{E}(I_sX,Z)\Big\}.
\end{multline}
After this preparations we are ready to state the main result.
\begin{thrm}
\label{t:div formulas} {\ Suppose $(M^{4n+3},\eta)$ is a
quaternionic contact
structure conformal to a 3-Sasakian structure $(M^{4n+3},\bar\eta)$}, $%
\tilde\eta\ =\ \frac{1}{2h}\, \eta.$ If $Scal_{\eta}=Scal_{\tilde%
\eta}=16n(n+2)$, then with $f$ given by
\begin{equation}  \label{e:f}
f\ = \ \frac 12\ +\ h\ +\ \frac 14 h^{-1}\lvert \nabla h \rvert^2,
\end{equation}
\noindent the following identity holds
\begin{multline}  \label{divgen}
\nabla^*\Bigl(f(D+E)\ +\  \sum_{s=1}^3dh(\xi_s)I_sE \ +\ \sum_{s=1}^3 dh(\xi_s)\, F_s \ +\
4\sum_{s=1}^3 dh(\xi_s)I_sA_s \ -\ \frac {10}{3}\sum_{s=1}^3
dh(\xi_s)\,I_s A \Bigr)\\ =\ \Big(\frac12 +h\Big)\Big(\lvert T^0
\rvert^2+\lvert\textbf{E}\rvert^2\Big)+2h|\mathbb{D}+\mathbb{E}|^2
+\ h\,\langle Q V,\, V\rangle .
\end{multline}
where  $Q$ is equal to
\begin{equation*}
Q := \left[ {%
\begin{array}{ccccccc}
{\displaystyle \frac {5}{2} } & -{\displaystyle \frac {1}{2} } &-{\displaystyle \frac {1}{2} } &-{\displaystyle \frac {1}{2} } & -2 & -2 &-2 \\[2ex]
-{\displaystyle \frac {1}{2} } &{\displaystyle \frac {5}{2} }\, & -{\displaystyle \frac {1}{2} } & -{\displaystyle \frac {1}{2} } & {\displaystyle \frac {10}{3}} \, & - {\displaystyle \frac {2}{3%
}} \, & - {\displaystyle \frac {2}{3} } \, \\[2ex]
-{\displaystyle \frac {1}{2} } &-{\displaystyle \frac {1}{2} } &{\displaystyle \frac {5}{2} }\, & -{\displaystyle \frac {1}{2} } & - {\displaystyle \frac {2}{3}} \, & {\displaystyle \frac {10}{3%
}} \, & - {\displaystyle \frac {2}{3 }} \, \\[2ex]
-{\displaystyle \frac {1}{2} } & -{\displaystyle \frac {1}{2} } & -{\displaystyle \frac {1}{2} } & {\displaystyle \frac {5}{2} }\, & - {\displaystyle \frac {2}{3}} \, & - {\displaystyle \frac {2}{%
3}} \, & {\displaystyle \frac {10}{3}} \, \\[2ex]
-2 &{\displaystyle \frac {10}{3}} \, & - {\displaystyle \frac {2}{3 }} \, & - {%
\displaystyle \frac {2}{3}} \, & {\displaystyle \frac {22}{3}} \, & - {%
\displaystyle \frac {2}{3}} \, & - {\displaystyle \frac {2}{3}} \, \\[2ex]
-2 &- {\displaystyle \frac {2}{3}} \, & {\displaystyle \frac {10}{3 }} \, & - {%
\displaystyle \frac {2}{3}} \, & - {\displaystyle \frac {2}{3}} \, & {%
\displaystyle \frac {22}{3}} \, & - {\displaystyle \frac {2}{3}} \, \\[2ex]
-2 &- {\displaystyle \frac {2}{3}} \, & - {\displaystyle \frac {2 }{3}} \, & {%
\displaystyle \frac {10}{3}} \, & - {\displaystyle \frac {2}{3}} \, & - {%
\displaystyle \frac {2}{3} } \, & {\displaystyle \frac {22}{3}} \,%
\end{array}%
}  \right]
\end{equation*}
Here, $Q$ is a positive definite matrix with eigenvalues $1$,
$\frac92\pm\frac{\sqrt {73}}{2}$ and $\frac{11}{2}\pm\frac{\sqrt
{89}}{2}$ and

$V=(E, D_1, D_2, D_3,A_1,
A_2, A_3)$ with $E$, $D_s$, $A_s$ defined, correspondingly, in \eqref{n31} \eqref{d:D_s} and %
\eqref{d:A_s}.
\end{thrm}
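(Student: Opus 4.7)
The plan is to compute $\nabla^*$ of each of the seven summands on the left of \eqref{divgen} separately and then combine the contributions, using the qc-Einstein hypothesis and the Yamabe equation \eqref{n1} to eliminate all terms of unwanted shape. The qc-Einstein assumption $\bar T^0=\bar U=0$ together with \eqref{e:U conf change} identifies $\mathbf{D}=-T^0$ and $\mathbf{E}=-2U$ via \eqref{n2}-\eqref{n3}; consequently $|\mathbf{D}|^2=|T^0|^2$ and $|\mathbf{E}|^2=4|U|^2$, and the seven one-forms $E,D_s,A_s$ can all be expressed in terms of $h,\nabla h,\nabla^2 h$ through \eqref{e:A_s}, \eqref{e:A}, \eqref{n21}, \eqref{Ds}, \eqref{n31} and \eqref{e:F_s by D_s}.

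For the main summand I expand $\nabla^*(f(D+E))=f\nabla^*D+f\nabla^*E+df(D+E)$. The identity $\nabla^*D$ is provided by \eqref{divD}. An analogous formula
\[
\nabla^*E\ =\ |\mathbf{E}|^2\ -\ h^{-1}g(dh,E)\ +\ c_n\,h^{-1}g(dh,A)
\]
is derived by repeating the argument of \cite[Lemma~4.2]{IMV1} with $U$ in place of $T^0$, invoking $\nabla^*U=\tfrac{1-n}{2}A$ from \eqref{div:To}; the combinatorial constant $c_n$ is fixed by the computation. Differentiating \eqref{e:f} gives $df(X)=(1-\tfrac14 h^{-2}|\nabla h|^2)dh(X)+\tfrac12 h^{-1}\nabla^2 h(X,\nabla h)$, which, paired with \eqref{n21} and \eqref{n31}, produces second-order terms that must eventually combine with $2h|\mathbb{D}+\mathbb{E}|^2$ through \eqref{ddd3}-\eqref{eee3}. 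For the remaining summands, \eqref{p:div of F_s} handles $\sum_s dh(\xi_s)F_s$, and \eqref{l:div of I_sA} takes care of $\sum_s dh(\xi_s)I_sA_s$ and $\sum_s dh(\xi_s)I_sA$. The only missing ingredient is the divergence of $\sum_s dh(\xi_s)I_sE$, which is obtained by a derivation parallel to the proof of \eqref{l:div of I_sA}: expand the divergence using that $\nabla$ preserves $H\oplus V$, substitute the formula for $\nabla^*E$ displayed above, and observe that the contributions of the $sp(1)$-connection cancel after summation over $s$ via the Clifford relations $I_jI_k=-I_kI_j$.

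After assembling all seven contributions, every term on the left of \eqref{divgen} falls into one of three shapes: (i) the diagonal parts $f|T^0|^2$ and $f|\mathbf{E}|^2$; (ii) mixed vertical-horizontal terms of the form $\nabla^2 h(I_se_a,\xi_s)\cdot(\text{horizontal one-form})$; (iii) zeroth-order quadratic expressions in the seven one-forms $E,D_1,D_2,D_3,A_1,A_2,A_3$ together with $|\mathbb{D}+\mathbb{E}|^2$. The coefficients $1,1,4,-\tfrac{10}{3}$ in front of $\sum dh(\xi_s)F_s$, $\sum dh(\xi_s)I_sE$, $\sum dh(\xi_s)I_sA_s$ and $\sum dh(\xi_s)I_sA$ are forced as the unique linear combination that kills every term of type (ii) simultaneously, since each of $D_s,E,F_s,A_s,A$ carries a distinct contraction of $\nabla^2 h$ with $I_s$ visible in \eqref{e:A_s}, \eqref{Ds}, \eqref{n31} and \eqref{d:F_s}. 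After absorbing the $\tfrac14 h^{-1}|\nabla h|^2$ piece of $f$ into $2h|\mathbb{D}+\mathbb{E}|^2$, what remains is exactly $h\langle QV,V\rangle$. Positive definiteness of $Q$ follows from its $S_3$-symmetry: decomposing $\R^7$ into $S_3$-isotypic components reduces $Q$ to a $3\times3$ block on the trivial representation with characteristic polynomial $(t-1)(t^2-9t+2)$, and a $2\times2$ block appearing with multiplicity $2$ on the standard representation with characteristic polynomial $t^2-11t+8$; all eigenvalues $1,\tfrac{9\pm\sqrt{73}}{2},\tfrac{11\pm\sqrt{89}}{2}$ are strictly positive as $\sqrt{73}<9$ and $\sqrt{89}<11$.

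The main obstacle is the simultaneous cancellation in step (ii) and the subsequent matching of the remainder with the specific matrix $Q$: the coefficients $4$ and $-\tfrac{10}{3}$ are not a priori evident, and guessing the seven-term ansatz---in particular the inclusion of $\sum dh(\xi_s)I_sE$, which has no counterpart in the seven-dimensional case of \cite{IMV1} because $U\equiv 0$ there---is the genuinely creative step. Once the ansatz is fixed, the remainder is a lengthy but essentially mechanical computation driven by \eqref{propt}, the commutation rules for $I_s$ acting on horizontal tensors, and the substitutions listed above.
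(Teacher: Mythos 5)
Your outline reproduces the paper's own proof: the same term-by-term divergence computation based on \eqref{divD}, \eqref{p:div of F_s} and \eqref{l:div of I_sA}, the same derivation of $\nabla^*E$ and $\nabla^*(I_sE)$ from $\nabla^*U=\frac{1-n}{2}A$, the same absorption of the $\frac14 h^{-1}\lvert\nabla h\rvert^2$ part of $f$ through the identity for $2|\mathbb{D}+\mathbb{E}|^2$, and the same $S_3$-block diagonalization of $Q$ (your characteristic polynomials $(t-1)(t^2-9t+2)$ and $t^2-11t+8$ are exactly right). The one inaccuracy is your claim that the coefficients $4$ and $-\frac{10}{3}$ are \emph{forced} as the unique combination cancelling the mixed terms $\nabla^2h(I_se_a,\xi_s)$: in the actual argument these terms are not cancelled but substituted away via \eqref{ead2}, \eqref{newAsdsE} and \eqref{divnew1}, the resulting divergence identity holds for the entire family $c\sum_s dh(\xi_s)I_sA_s-\frac{c}{3}\sum_s dh(\xi_s)I_sA$, and $c=4$ is singled out only because it produces the positive definite matrix $Q$ --- a harmless point here, since the theorem hands you the coefficients and your scheme would still verify the identity for them.
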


\begin{proof} For the sake of making some formulas more compact, in the proof we will use sometimes the notation $XY=g(X,Y)$ for the product of two horizontal vector fields $X$ and $Y$ and the similar abbreviation for horizontal 1-forms.

We begin by recalling \eqref{n21}, \eqref{n31} and \eqref{e:A}, which imply
\begin{multline}  \label{ead1}
A(X)=\frac{3E(X)-D(X)}2-h^{-1}\sum_{s=1}^3\nabla^2h(I_sX,\xi_s) \\
+\frac32h^{-2}\sum_{s=1}^3dh(\xi_s)dh(I_sX)-\frac32h^{-2}\Big(\frac12+h+\frac14h^{-1}|%
\nabla h|^2\Big)dh(X).
\end{multline}
Using the function $f$ defined in \eqref{e:f}, we write \eqref{ead1} in the
form
\begin{equation}  \label{ead2}
2\sum_{s=1}^3\nabla^2h(I_sX,\xi_s)=h(3E(X)-D(X)-2A(X))+3h^{-1}\sum_{s=1}^3dh(%
\xi_s)dh(I_sX)-3h^{-1}fdh(X).
\end{equation}
The sum of \eqref{n21} and \eqref{n31} yields
\begin{equation}  \label{ed1}
(E+D)(X)=h^{-2}\nabla^2h(X,\nabla h)+h^{-2}\sum_{s=1}^3dh(\xi_s)dh(I_sX)+\frac{h^{-2}}4%
\Big(-2+4h-3h^{-1}|\nabla h|^2\Big)dh(X)\Big].
\end{equation}
Using \eqref{e:f} and \eqref{ed1}, we obtain
\begin{equation}  \label{df}
2\nabla_Xf
= h(E+D)(X) -h^{-1}\sum_{s=1}^3dh(\xi_s)dh(I_sX)+h^{-1}fdh(X).
\end{equation}
We calculate the divergences of $E$ using \eqref{div:To} as
follows
\begin{multline}  \label{divE}
\nabla^*E=2h^{-2}dh(e_a)U(e_a,\nabla h)- 2h^{-1}(\nabla_{e_a}U)(e_a,\nabla
h) - 2h^{-1}U(e_a,e_b)\nabla^2h(e_a,e_b)  \\
=-h^{-1}(1-n)A(\nabla h)+U(e_a,e_b)(-2h^{-1})\Big[%
\nabla^2h(e_a,e_b)-2dh(e_a)dh(e_b)\Big]+h^{-1}E(\nabla h) \\
=|\mathbf{E}|^2+h^{-1}dh(e_a)E(e_a)-h^{-1}(1-n)dh(e_a)A(e_a).
\end{multline}
Similarly, we have
\begin{multline}  \label{divIE}
-\nabla^*I_sE=2h^{-2}dh(e_a)U(I_se_a,\nabla h)+
2h^{-1}(\nabla_{e_a}U)(e_a,I_s\nabla h)
- 2h^{-1}U(I_se_a,e_b)\nabla^2h(e_a,e_b) \\
=h^{-1}(1-n)A(I_s\nabla h)+U(I_se_a,e_b)(-2h^{-1})\Big[%
\nabla^2h(e_a,e_b)-2dh(e_a)dh(e_b)\Big]+h^{-1}E(I_s\nabla h) \\
=U(I_se_a,e_b)U(e_a,e_b)-h^{-1}(1-n)dh(I_se_a)A(e_a)
=-h^{-1}(1-n)dh(I_se_a)A(e_a),
\end{multline}
since $U(I_se_a,e_b)U(e_a,e_b)=E(I_s\nabla h)=0$ due to
\eqref{propt}.

Now we are prepared to calculate the divergence of the first four terms. Using %
\eqref{divD}, \eqref{p:div of F_s}, \eqref{divE}, \eqref{df},
\eqref{divIE} and \eqref{ead2}, we have
\begin{multline}  \label{newdiv1}
\nabla_{e_a}\Big[f(D+E)(e_a)-\sum_{s=1}^3dh(\xi_s)E(I_se_a)+\sum_{s=1}^3dh(\xi_s)F_s(e_a)\Big] \\
= \Big(\frac{h}2(E+D)(e_a) -\frac{h^{-1}}2\sum_{s=1}^3dh(\xi_s)dh(I_se_a)+\frac{h^{-1}}%
2fdh(e_a)\Big)(D+E)(e_a) \\
+f\Big[-h^{-1}D(\nabla h)-h^{-1}(n+2)A(\nabla h)+|T^0|^2+
|E|^2+h^{-1}dh(e_a)E(e_a)-h^{-1}(1-n)dh(e_a)A(e_a)\Big] \\
+h^{-1}(1-n)\sum_{s=1}^3dh(\xi_s)dh(I_se_a)A(e_a)+\sum_{s=1}^3\nabla^2h(I_se_a,\xi_s)E(e_a)
\\
+ \sum_{s=1}^3\nabla^2h\, (I_se_a,\xi_s)F_s(I_se_a) + \
h^{-1}\sum_{s=1}^3dh(\xi_s)dh
(I_se_a)D(e_a)\ +(n+2)\,\sum_{s=1}^3dh(\xi_s)dh (I_s e_a)\, A(e_a) \\
= f(|T^0|^2+|\mathbf{E}|^2)+\frac{h}2|D+E|^2 +%
\frac{h}2(3E-D-2A)(e_a)E(e_a) \\
+h^{-1}\Big[\sum_{s=1}^3dh(\xi_s)dh(I_se_a)-fdh(e_a)\Big]\Big(\frac12D(e_a)+3A(e_a)\Big)
+ \sum_{s=1}^3\nabla^2 h\, (I_se_a,\xi_s)F_s(I_se_a).
\end{multline}

Applying  \eqref{l:div of I_sA} and \eqref{ead2} we obtain
\begin{multline}  \label{newdiv2}
\nabla_{e_a}\Big[f(D+E)(e_a)-\sum_{s=1}^3dh(\xi_s)E(I_se_a)+\sum_{s=1}^3dh(\xi_s)F_s(e_a)-2\sum_{s=1}^3dh(%
\xi_s)I_sA(e_a)\Big] \\
=
f(|T^0|^2+|\mathbf{E}|^2)+\frac{h}2|D+E|^2+\frac{h}2(3E-D-2A)E-h(3E-D-2A)A
\\
+\frac{h^{-1}}2\Big[\sum_{s=1}^3dh(\xi_s)dh(I_se_a)-fdh(e_a)\Big]D(e_a)
+ \sum_{s=1}^3\nabla^2 h\, (I_se_a,\xi_s)F_s(I_se_a)
\end{multline}

According to \eqref{e:F_s by D_s}, the last term in \eqref{newdiv2} reads
\begin{multline}  \label{newDs}
\sum_{s=1}^3\nabla^2 h\, (I_se_a,\xi_s)F_s(I_se_a)=D_1(e_a)\Big[\nabla^2h(I_1e_a,\xi_1)-%
\nabla^2h(I_2e_a,\xi_2)-\nabla^2h(I_3e_a,\xi_3) \Big] \\
+D_2(e_a)\Big[-\nabla^2h(I_1e_a,\xi_1)+\nabla^2h(I_2e_a,\xi_2)-%
\nabla^2h(I_3e_a,\xi_3) \Big] \\
+D_3(e_a)\Big[-\nabla^2h(I_1e_a,\xi_1)-\nabla^2h(I_2e_a,\xi_2)+%
\nabla^2h(I_3e_a,\xi_3) \Big].
\end{multline}
Using \eqref{newDs} we rewrite the last line in \eqref{newdiv2} as follows
\begin{multline}  \label{newDs1}
\Big[\frac{h^{-1}}2\sum_{s=1}^3dh(\xi_s)dh(I_se_a)-\frac{h^{-1}}2fdh(e_a)\Big]D(e_a)
+
\sum_{s=1}^3\nabla^2 h\, (I_se_a,\xi_s)F_s(I_se_a) \\
= D_1(e_a)\Big[\nabla^2h(I_1e_a,\xi_1)-\nabla^2h(I_2e_a,\xi_2)-%
\nabla^2h(I_3e_a,\xi_3)+\frac{h^{-1}}2\sum_{s=1}^3dh(\xi_s)dh(I_se_a)-\frac{h^{-1}}%
2fdh(e_a) \Big] \\
+D_2(e_a)\Big[-\nabla^2h(I_1e_a,\xi_1)+\nabla^2h(I_2e_a,\xi_2)-%
\nabla^2h(I_3e_a,\xi_3)+\frac{h^{-1}}2\sum_{s=1}^3dh(\xi_s)dh(I_se_a)-\frac{h^{-1}}%
2fdh(e_a) \Big] \\
+D_3(e_a)\Big[-\nabla^2h(I_1e_a,\xi_1)-\nabla^2h(I_2e_a,\xi_2)+%
\nabla^2h(I_3e_a,\xi_3)+\frac{h^{-1}}2\sum_{s=1}^3dh(\xi_s)dh(I_se_a)-\frac{h^{-1}}%
2fdh(e_a) \Big].
\end{multline}
The equalities  \eqref{n31}, \eqref{Ds} and \eqref{e:A_s} imply
\begin{multline}  \label{newAsdsE}
\nabla^2h(I_2X,\xi_2)+\nabla^2h(I_3X,\xi_3)\\=h(E-D_1-2A_1)(X)
+h^{-1}\sum_{s=1}^3dh(\xi_s)dh(I_sX)-h^{-1}fdh(X),
\end{multline}
Subtracting two times \eqref{newAsdsE} from \eqref{ead2} we obtain
\begin{multline}  \label{fin1}
\nabla^2h(I_1e_a,\xi_1)-\nabla^2h(I_2e_a,\xi_2)-\nabla^2h(I_3e_a,\xi_3)+%
\frac{h^{-1}}2\sum_{s=1}^3dh(\xi_s)dh(I_se_a)-\frac{h^{-1}}2fdh(e_a) \\
=\frac{h}2\Big[-E-D+4D_1-2A+8A_1\big](e_a)
\end{multline}
The left-hand side of the above identity  is the second line in
\eqref{newDs1}. The other two lines are evaluated similarly and
the formulas are obtained from the above by a cyclic rotation of
$\{1,2,3\}$.
 A substitution of the resulting new form of \eqref{newDs1}
 in \eqref{newdiv2} give
\begin{multline}  \label{finn}
\nabla_{e_a}\Big[f(D+E)(e_a)-\sum_{s=1}^3dh(\xi_s)E(I_se_a)+\sum_{s=1}^3dh(\xi_s)F_s(e_a)-2\sum_{s=1}^3dh(%
\xi_s)I_sA(e_a)\Big]
\\=
f\Big(|T^0|^2+|\mathbf{E}|^2\Big)+\frac{4h}2\Big[%
E^2+A^2+D_1^2+D_2^2+D_3^2-2AE+2A_1D_1+2A_2D_2+2A_3D_3\Big].
\end{multline}
In view of \eqref{l:div of I_sA} for any non-zero
constant $c$ we calculate the following divergences as follows
\begin{multline}  \label{divnew}
\nabla_{e_a}\Big(c\sum_{s=1}^3dh(\xi_s)I_sA_s(e_a) -\frac{c}3\sum_{s=1}^3dh(\xi_s)I_sA(e_a)\Big) \\
=\frac{c}3\Big[2\nabla^2h(I_1e_a,\xi_1)-\nabla^2h(I_2e_a,\xi_2)-%
\nabla^2h(I_3e_a,\xi_3) \Big]A_1(e_a) \\
+\frac{c}3\Big[2\nabla^2h(I_2e_a,\xi_2)-\nabla^2h(I_1e_a,\xi_1)-%
\nabla^2h(I_3e_a,\xi_3) \Big]A_2(e_a) \\
+\frac{c}3\Big[2\nabla^2h(I_3e_a,\xi_3)-\nabla^2h(I_2e_a,\xi_2)-%
\nabla^2h(I_1e_a,\xi_1) \Big]A_3(e_a)
\end{multline}
 subtracting
\eqref{newAsdsE} from twice  \eqref{ead2} yields
\begin{multline}  \label{divnew1}
2\nabla^2h(I_1e_a,\xi_1)-\nabla^2h(I_2e_a,\xi_2)-\nabla^2h(I_3e_a,\xi_3) 
\\=h\Big[2D_1-D_2-D_3+4A_1-2A_2-2A_3\Big](e_a)
\end{multline}
Now, taking into account \eqref{divnew1}, \eqref{divnew} and
\eqref{finn} we obtain
\begin{multline}  \label{finnn}
\nabla^*\Big[f(D+E)(X)-\sum_{s=1}^3dh(\xi_s)E(I_sX)+\sum_{s=1}^3dh(\xi_s)F_s(X)-2\sum_{s=1}^3dh(\xi_s)I_sA(X)\Big]\\+\nabla^*\Big[c\quad
\sum_{s=1}^3dh(\xi_s)I_sA_s(X) -\frac{c}3\sum_{s=1}^3dh(\xi_s)I_sA(X)\Big] \\
= f\Big(|T^0|^2+|\mathbf{E}|^2\Big)+\frac{4h}2\Big[%
E^2+A^2+D_1^2+D_2^2+D_3^2-2AE+2A_1D_1+2A_2D_2+2A_3D_3\Big] \\
+h\frac{c}3\Big[(2D_1-D_2-D_3+4A_1-2A_2-2A_3)A_1\Big]
+h\frac{c}3\Big[(2D_2-D_1-D_3+4A_2-2A_1-2A_3)A_2\Big] \\
+h\frac{c}3\Big[(2D_3-D_2-D_1+4A_3-2A_2-2A_1)A_3\Big]
\end{multline}

In the next Lemma, as in the proof of Theorem \ref{t:div formulas}, we shall use again the notation $XY=g(X,Y)$ for the product of two horizontal vector fields $X$ and $Y$ and the similar abbreviation for horizontal 1-forms.

\begin{lemma}\label{lemDE}
For the (0,3)-tensors $\mathbb{D}$ and $\mathbb{E}$ defined by
\eqref{ddd3} and \eqref{eee3} we have

\begin{equation}\label{calDE}
\begin{aligned}
|\mathbb{D}|^2\ =\ \frac{1}{8}h^{-2}|\nabla h|^2|T^0|^2-\frac14\sum_{s=1}^3|D_s|^2+\frac12(D_1D_2+D_1D_3+D_2D_3),\\
|\mathbb{E}|^2\ =\ \frac{1}{8}h^{-2}|\nabla
h|^2|\mathbf{E}|^2-\frac14|E|^2,\qquad \mathbb{D}\mathbb{E}\ =\
\frac14\sum_{s=1}^3 ED_s.\qquad
\end{aligned}
\end{equation}
Consequently,
\begin{multline}\label{ednew}
\frac14h^{-2}|\nabla h|^2(|T^0|^2+|\mathbf{E}|^2)\ =\
2|\mathbb{D}+\mathbb{E}|^2-\sum_{s=1}^3 ED_s\\\ +\
\frac12|E|^2+\frac12\sum_{s=1}^3|D_s|^2-(D_1D_2+D_1D_3+D_2D_3)
\end{multline}
\end{lemma}

\begin{proof}  We shall repeatedly apply \eqref{propt}, the defining equations \eqref{ddd3}, \eqref{eee3}, \eqref{d:A_s} and \eqref{d:def of D}. We have
\begin{multline}\label{d2}
|\mathbb{D}|^2\ 
=\frac{h^{-2}}{8}|\nabla
h|^2|T^0|^2+\frac{h^{-2}}{8^2}\Big(2T^0(\nabla h,e_c)T^0(\nabla
h,e_c)\\-4\sum_{s=1}^3T^0(I_s\nabla h,e_c)T^0(I_s\nabla h,e_c)
+2\sum_{s,t=1}^3T^0(I_sI_t\nabla h,e_c)T^0(I_tI_s\nabla
h,e_c)\Big)\\
=\frac{h^{-2}}{8}|\nabla
h|^2|T^0|^2+\frac{1}{4}\Big(-\sum_{s=1}^3D_s^2+2(D_1D_2+D_1D_3+D_2D_3)\Big)
\end{multline}



which is the first line of \eqref{calDE}. For example, the third
term in \eqref{d2} is calculated as follows
\begin{multline*}
\sum_{s,t=1}^3T^0(I_sI_t\nabla h,e_c)T^0(I_tI_s\nabla h,e_c)=
\sum_{s=1}^3\Big[T^0(\nabla h,e_c)T^0(\nabla h,e_c)-2T^0(I_s\nabla h,e_c)T^0(I_s\nabla h,e_c)\Big]\\
=6|D|^2-12\sum_{s=1}^3D_s^2+8(D_1D_2+D_1D_3+D_2D_3)
=-6\sum_{s=1}^3D_s^2+20(D_1D_2+D_1D_3+D_2D_3).
\end{multline*}


Similarly, we obtain the second line of \eqref{calDE}. The
equality \eqref{ednew} follows from \eqref{calDE} which completes
the proof of Lemma~\ref{lemDE}.
\end{proof}

Finally, the proof of Theorem~\ref{t:div formulas} follows by
letting $c=4$ in \eqref{finnn}and using \eqref{ednew} and
\eqref{d:A_s}.
\end{proof}

\section{Proof of Theorem\protect[\ref{mainth}] and Theorem\protect[\ref{main2}]}

We begin with the proof of Theorem \ref{mainth}. The first step of
the proof relies on Theorem~\ref{t:div formulas}. By a homothety
we can suppose that both qc-scalar curvatures are equal to
$16n(n+2)$. Integrating the divergence formula of
Theorem~\ref{t:div formulas}
and then using  the divergence theorem established in \cite[Proposition 8.1%
]{IMV} shows that the integral of the left-hand side is zero. Thus, the
right-hand side vanishes as well, which shows that the
quaternionic contact structure $\bar\eta$ has vanishing torsion,
i.e., it is also qc-Einstein according to
\cite[Proposition~4.2]{IMV}. This proves the first part of
Theorem~\ref{mainth}.

To prove the second part, we develop a sub-Riemannian extension of
the result of \cite{Ob}, see also \cite{BoEz87} and the review
\cite[Theorem 2.6]{IV14}, on the relation between the Yamabe
equation and the Lichnerowicz-Obata first eigenvalue estimate.  We
begin by recalling some results from \cite[Section 7.2]{IMV}. A
vector field $Q$ on a qc manifold $(M, \eta)$ is a \emph{qc vector
field}\index{qc vector field} if its flow preserves the horizontal
distribution $H=\ker \eta$. Since the conformal class of the qc
structure on $\text{span}\{\eta_1,\eta_2,\eta_3\}$ is uniquely
determined by $H$ (cf. \cite{Biq1})
, we have that
\begin{equation*}
\LieQ\eta=(\nu I+O)\cdot\eta,
\end{equation*}
where $\nu$ is a smooth function and $O\in so(3)$ is a matrix
valued function with smooth entries. Since the exterior derivative
$d$ commutes with the Lie derivative $\LieQ$, any qc vector field
$Q$ satisfies
\begin{gather*}
\LieQ g =\nu g,\qquad \LieQ I=O\cdot I,
\qquad I=(I_1,I_2,I_3)^t,
\end{gather*}
which is equivalent to saying that the flow of $Q$ preserves  the
conformal class $[g]$ of the horizontal metric and the
quaternionic structure $\mathbb Q$ on $H$. The function $\nu$ can
be easily expressed in terms of the divergence (with respect to
$g$) of the horizontal part $Q_H$ of the vector field $Q$. Indeed,
from \cite[Lemma 7.12]{IMV} we have
\begin{gather*}
g(\nabla_XQ_H,Y)\ +\ g(\nabla_YQ_H,X)\ + \ 2\eta_s(Q)g(T^0_{\xi_s} X,Y)=\nu\, g(X,Y),
\end{gather*}
hence
\begin{equation*}
\nu=\frac {1}{2n}\nabla^* Q_H.
\end{equation*}
This gives a geometric interpretation for the quantity $(\nabla^*
Q_H)$, namely, the flow of a qc vector field $Q$ preserves a fixed
metric $g\in[g]$ if and only if $\nabla^* Q_H=0$.

As an infinitesimal version of the qc Yamabe equation we obtain the
following general fact concerning the divergence of a QC vector
field.

\begin{lemma}\label{l:lapdiv} Let $(M,\eta)$ be a  qc  manifold.
For any qc vector field $Q$ on $M$ we have
\begin{equation*}
\Delta(\nabla^*Q_H)\ =\ -\ \frac{n}{2(n+2)}Q(\text{Scal})\ -\ \frac{\text{Scal}}{4(n+2)}\nabla^*Q_H,
\end{equation*}
where Scal, $\nabla^*$, $\Delta$ and the projection $Q_H$ correspond to the contact form $\eta$.
\end{lemma}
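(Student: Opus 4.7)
The plan is to view Lemma \ref{l:lapdiv} as the infinitesimal version of the conformal scalar curvature formula \eqref{e:conf change scalar curv}. A qc vector field $Q$ has a flow $\phi_t$ whose differential preserves $H$ and the conformal class of horizontal metrics, so at each time the pullback can be written as $\phi_t^*\eta = \mu_t\Psi_t\eta$ for some positive function $\mu_t$ and some $\Psi_t\in SO(3)$ depending smoothly on $t$. Setting $h_t=1/(2\mu_t)$, this is just a conformal qc change $\phi_t^*\eta = \frac{1}{2h_t}\Psi_t\eta$, and since the qc-scalar curvature is $SO(3)$-invariant, \eqref{e:conf change scalar curv} gives
\begin{equation*}
\text{Scal}(\phi_t^*\eta)\ =\ 2h_t\,\text{Scal}\ -\ 8(n+2)^2 h_t^{-1}|\nabla h_t|^2\ +\ 8(n+2)\,\triangle h_t.
\end{equation*}

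Next I would exploit the naturality of scalar curvature under diffeomorphisms: $\text{Scal}(\phi_t^*\eta)(p)=\text{Scal}(\eta)(\phi_t(p))$, so that $\frac{d}{dt}\big|_{t=0}\text{Scal}(\phi_t^*\eta) = Q(\text{Scal})$. On the other hand, differentiating the right-hand side of the displayed formula at $t=0$ and using $h_0=1/2$, $\nabla h_0=0$, so that the $|\nabla h_t|^2$ term contributes nothing, yields
\begin{equation*}
Q(\text{Scal})\ =\ 2\dot h_0\,\text{Scal}\ +\ 8(n+2)\,\triangle\dot h_0.
\end{equation*}
Since $\mathcal{L}_Q\eta = (\nu I+O)\eta$ with $\nu = \tfrac{1}{2n}\nabla^*Q_H$, differentiating $\phi_t^*\eta=\mu_t\Psi_t\eta$ at $t=0$ gives $\dot\mu_0=\nu$, and therefore $\dot h_0 = -\nu/2 = -\frac{1}{4n}\nabla^*Q_H$.

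Substituting this into the previous line produces
\begin{equation*}
Q(\text{Scal})\ =\ -\frac{\text{Scal}}{2n}\,\nabla^*Q_H\ -\ \frac{2(n+2)}{n}\,\triangle(\nabla^*Q_H),
\end{equation*}
and solving for $\triangle(\nabla^*Q_H)$ gives exactly the claimed identity. The only delicate points in carrying this through are: first, justifying that $\phi_t^*\eta$ admits a smooth factorization $\mu_t\Psi_t\eta$ (which follows from the fact that $Q$ is qc, together with the fact that $H$ determines the conformal class $[\eta]$ on the rank-three annihilator); and second, ensuring that the $|\nabla h_t|^2$ term really vanishes to first order at $t=0$, which it does because it is smooth and quadratic in $\nabla h_t$ with $\nabla h_0=0$. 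No further computation is needed, since the identity $\nu=\tfrac{1}{2n}\nabla^*Q_H$ is already recorded in the excerpt.
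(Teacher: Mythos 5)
Your proposal is correct and follows essentially the same route as the paper: both identify the flow $\phi_t$ of $Q$ as a family of qc-conformal transformations $\phi_t^*\eta=\frac{1}{2h_t}\eta$, apply the conformal change formula \eqref{e:conf change scalar curv} together with the naturality $\mathrm{Scal}(\phi_t^*\eta)=\mathrm{Scal}\circ\phi_t$, and differentiate at $t=0$ using $h_0=\frac12$, $\nabla h_0=0$ and $\dot h_0=-\frac{1}{4n}\nabla^*Q_H$. The only cosmetic difference is that you carry the $SO(3)$ factor $\Psi_t$ explicitly and discard it by $SO(3)$-invariance of the qc-scalar curvature, which the paper leaves implicit.
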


\begin{proof}
Suppose $Q$ is a qc vector field and
let $\phi_t$ be the corresponding (local) 1-parameter group of diffeomorphisms  generated by its flow. Then $$\phi_t^*(\eta)\ =\ {\displaystyle \frac{1}{ 2h_t}}\,\eta\quad\text{and}\quad \phi_t^*(g)\ =\ {\displaystyle \frac{1}{ 2h_t}}\,g$$ for some positive function $h_t$, depending smoothly on the parameter $t$. The qc scalar curvature $\text{Scal}_t$ of the pull back contact form $\phi_t^*(\eta)$ is given by $\text{Scal}_t=\text{Scal}\circ\phi_t$. Then, formula  \eqref{e:conf change scalar curv} yields
\begin{equation}\label{e:conf change scalar curv-t}
\text{Scal}\circ\phi_t\ =\ 2h_t\,(\text{Scal})\ -\ 8(n+2)^2\,h_t^{-1}|\nabla
h_t|^2\ +\ 8(n+2)\,\triangle h_t.
\end{equation}
Clearly, we have $h_0=\frac12$, and from
\begin{gather*}
\frac{1}{2n}(\nabla^*Q_H)\,g\ =\ \mathcal L_Q\,g\ =\ \frac{d}{dt}\vert_{ t=0}\left(\frac{1}{2h_t}g\right)\ =\ -\ \frac{h'_0}{2h_0}\,g\ =\ -\ 2h'_0\, g
\end{gather*}
we obtain that $h'_0=-\frac{1}{4n}\nabla^*Q_H$, where $h'_0$ denotes the derivative of $h_t$ at $t=0$.
A differentiation at $t=0$ in \eqref{e:conf change scalar curv-t} gives the lemma.
\end{proof}

\begin{lemma}\label{l:qc conf Einst} Let $(M,\eta)$ and $(M,\bar\eta)$ be qc-Einsten manifolds with equal qc-scalar curvatures
$16n(n+2)$. If $\eta$ and $\bar\eta$ are qc conformal to each other, $\overline \eta=\frac{1}{2h}\eta$ for some smooth positive function $h$, then  \begin{equation}\label{e:qc field}
Q=\frac12\nabla f+\sum_{s=1}^3dh(\xi_s)\xi_s
\end{equation}
is a qc vector field on $M$, where the function $f$ is defined in
\eqref{df}.
\end{lemma}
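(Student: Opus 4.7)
\emph{Plan.} To show $Q$ is a qc vector field, I would verify directly that $\mathcal{L}_Q\eta_i$ annihilates $H=\ker\eta$ for each $i=1,2,3$. Since $\frac12\nabla f$ is horizontal and $\eta_s(\xi_t)=\delta_{st}$, we have $\eta_i(Q)=dh(\xi_i)$, so Cartan's formula gives, for horizontal $X$,
\begin{equation*}
(\mathcal{L}_Q\eta_i)(X)\ =\ X\bigl(dh(\xi_i)\bigr)\ +\ g(I_i\nabla f,X)\ +\ dh(\xi_j)\,d\eta_i(\xi_j,X)\ +\ dh(\xi_k)\,d\eta_i(\xi_k,X),
\end{equation*}
after using $d\eta_i|_{H\times H}=2\omega_i$ together with the Biquard identities \eqref{bi1} (which kill the $s=i$ contribution and pair the $s=j,k$ ones with $(\xi_i\lrcorner d\eta_s)_H$). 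The goal is to prove that this right-hand side vanishes identically.

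The three inputs from the qc-Einstein hypotheses are as follows. First, the combined vanishing $T^0=U=\bar T^0=\bar U=0$ applied to the transformation formulas \eqref{e:U conf change} produces the algebraic identities $[\nabla dh]_{[sym][-1]}=0$ and $[\nabla dh-2h^{-1}dh\otimes dh]_{[3][0]}=0$ on $H\times H$, which combined with the Yamabe trace equation \eqref{n1} determine $\nabla dh|_{H\times H}$ completely. Second, since $D_s=E=0$, the formula \eqref{df} collapses to the closed expression
\begin{equation*}
\nabla f\ =\ \tfrac12 h^{-1}f\,\nabla h\ +\ \tfrac12 h^{-1}\sum_{s} dh(\xi_s)\,I_s\nabla h,
\end{equation*}
which makes $g(I_i\nabla f,X)$ an explicit expression in $h$, $dh$ and the Reeb contractions $dh(\xi_s)$. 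Third, since $\eta$ is locally 3-Sasakian, $[\xi_j,\xi_k]$ is proportional to $\xi_i$, hence $A_s\equiv 0$ and $A\equiv 0$; consequently \eqref{e:A_s} collapses to an identity expressing the mixed derivatives $\nabla dh(I_jX,\xi_j)+\nabla dh(I_kX,\xi_k)$ in terms of $h$, $dh$, $dh(\xi_s)$ and contractions of the form $\nabla dh(I_sX,I_s\nabla h)$.

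The final step rewrites $X(dh(\xi_i))=\nabla dh(X,\xi_i)+dh(\nabla_X\xi_i)$ via the Biquard connection, the second term being controlled by the Biquard structural equations on the vertical bundle, and converts the pairings $d\eta_i(\xi_j,X)$ to $\nabla dh$-expressions via the same $A_s=0$ identities. After substituting the closed form for $\nabla f$ and eliminating horizontal second derivatives through the first input, one arrives at a single polynomial expression in $\nabla dh(X,\xi_i)$, $\nabla dh(I_jX,\xi_j)$, $\nabla dh(I_sX,I_s\nabla h)$, $dh$, $dh(\xi_s)$ and $h$. The principal obstacle is showing that this expression collapses to zero: it requires simultaneous and precise bookkeeping of the three types of mixed second derivatives together with four families of lower-order contractions, with the cancellation occurring only because of the $Sp(n)Sp(1)$-invariant decomposition $\Psi=\Psi_{[3]}+\Psi_{[-1]}$ applied to $\nabla dh$ and the particular choice of $f$ in \eqref{e:f}. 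That choice was tuned earlier so that $\nabla f$ reproduces exactly the combination of Reeb derivatives of $h$ that is needed to match the horizontal terms, which is why the argument pivots on the specific form $f=\frac12+h+\frac14 h^{-1}|\nabla h|^2$.
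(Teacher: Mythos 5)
Your setup is sound and is in fact dual to the paper's argument: showing $(\mathcal{L}_Q\eta_i)|_H=0$ is the same as showing that the vertical part of $[Q,X]$ vanishes for horizontal $X$, which is what the paper computes directly with the Biquard connection. You have also correctly isolated the hypotheses that matter ($T^0=U=\bar T^0=\bar U=0$, hence $E=D=D_s=A_s=0$, the collapse of \eqref{df}, and the collapse of \eqref{e:A_s}). But the proposal stops exactly where the lemma's content begins: you state that ``the principal obstacle is showing that this expression collapses to zero'' and describe the bookkeeping without performing it. The whole point of the lemma is that this cancellation happens, and it does so for a clean, quotable reason that your outline never reaches: combining \eqref{newAsdsE} and \eqref{fin1} under $E=D=D_s=A_s=0$ and comparing with \eqref{df} gives the pointwise identity $\nabla^2h(I_sX,\xi_s)=-df(X)$ for \emph{each} $s$ separately, equivalently $\nabla^2h(X,\xi_s)=df(I_sX)$. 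This is the identity that makes the vertical terms cancel, and it is precisely the sense in which $f$ is ``tuned''; deriving it requires more than the single relation you extract from $A_s=0$ (which only controls the sum $\nabla dh(I_jX,\xi_j)+\nabla dh(I_kX,\xi_k)$) -- you must also use the $D_s=0$ relations, i.e.\ \eqref{fin1}, to separate the three mixed derivatives. Without this step the proof is a plan, not a proof.

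A secondary point: your Cartan-formula route introduces terms the paper's computation avoids having to balance by hand. Writing $X(dh(\xi_i))=\nabla dh(X,\xi_i)+dh(\nabla_X\xi_i)$ brings in the $sp(1)$-connection forms through $\nabla_X\xi_i$, and these must cancel against the pairings $dh(\xi_j)\,d\eta_i(\xi_j,X)+dh(\xi_k)\,d\eta_i(\xi_k,X)$. In the paper's version the analogous terms disappear automatically because $\nabla$ preserves $V$ and the $sp(1)$-rotation terms cancel in the invariant sum $\sum_s dh(\xi_s)\xi_s$, leaving only $\sum_s\nabla dh(X,\xi_s)\xi_s=\sum_s df(I_sX)\xi_s$, which then cancels the torsion term $-\sum_s\omega_s(\nabla f,X)\xi_s=\sum_s df(I_sX)\xi_s$ coming from $\frac12[\nabla f,X]$. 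If you pursue your dual formulation you will need to verify this $sp(1)$ cancellation explicitly; it is true, but it is an additional obligation your write-up does not discharge.
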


\begin{proof} The assumption of the lemma implies that
$E=D=D_s=A_s=0$. Using \eqref{newAsdsE},
\eqref{fin1} and \eqref{df} we obtain
$\nabla^2h(I_sX,\xi_s)=-df(X)$ and thus $\nabla^2h(X,\xi_s)=df(I_sX).$ It follows that
\begin{equation*}
\sum_{s=1}^3\nabla_X(dh(\xi_s)\xi_s)=\sum_{s=1}^3df(I_sX)\xi_s.
\end{equation*}

To show that the flow of the vector field $Q$, defined by
\eqref{e:qc field}, preserves the horizontal distribution $H$, for
any $X\in H$, we have

\begin{multline*}
\mathcal L_Q(X)
= \frac12 \,[\nabla f,X]+\sum_{s=1}^3[dh(\xi_s)\xi_s,X]  =
\frac12\nabla_{\nabla f} X\ -\ \frac12\nabla_X(\nabla f)\ -\
\sum_{s=1}^3\omega_s(\nabla f,X)\xi_s\ \\+\ \sum_{s=1}^3 \left
[dh(\xi_s)\nabla_{\xi_s}X- \nabla_X(dh(\xi_s)\xi_s)-
dh(\xi_s)T_{\xi_s}(X)\right ]  =
 \frac12\nabla_{\nabla f} X\ -\
\frac12\nabla_X(\nabla f)+\sum_{s=1}^3 dh(\xi_s)\nabla_{\xi_s}X \ \in H.
\end{multline*}

\end{proof}

At this point we are ready to complete the proof of Theorem
\ref{mainth}. Consider the qc vector field $Q$ defined in Lemma
\ref{l:qc conf Einst}. By Lemma \ref{l:lapdiv}, {the function
$\phi=\frac12\triangle f$ is either an eigenfunction of the
sub-Laplacian with eigenvalue $-4n$, $\triangle\phi=-4n\phi$}, or
it vanishes identically. In the first case, using the quaternionic
contact version of the Lichnerowicz-Obata eigenfunction sphere
theorem \cite[Theorem
1.2]{IPV2} and \cite[Corollary~1.2]{IPV1} (see also 
\cite{BauKim14}), we conclude
that $(M, \eta)$ is the 3-Sasakain sphere.
{In the other case, we have that $\Delta f =  0$, hence the
function $f=\frac 12 + h+\frac 14 h^{-1}\lvert \nabla h
\rvert^2=const$ since $M$ is compact. It follows that $h=1/2$ by
considering the points where $h$ achieves its minimum and maximum
and taking into account the qc Yamabe equation \eqref{n1}. }The
proof of Theorem~\ref{mainth} is complete.

\begin{rmrk}\label{mys} {Lemma~\ref{l:qc conf Einst} provides also a certain geometric
insight for the mysterious function $f$ in \eqref{e:f}. In fact,
up to an additive constant, $f$ is the unique function on $M$ for
which $Q_H=\frac 12\nabla f$ is the horizontal part of a qc vector
field $Q$ with vertical part $Q_V=dh(\xi_s)\xi_s$, $Q=Q_H+Q_V$.
This assertion is an easy consequence of the computation  given in
the proof of  Lemma~\ref{l:qc conf Einst}. Moreover, it implies
that on the 3-Sasakain sphere $\phi=\triangle f$ is an
eigenfuction of the sub-Laplacian realizing the smallest possible
eigenvalue $-4n$ on a compact locally 3-Sasakian manifold.}
\end{rmrk}

Theorem~\ref{main2} is a direct corollary from Theorem
\ref{mainth}. Alternatively, as in the proof of
Theorem~\ref{mainth}, we can use in the first step  Theorem
\ref{t:div formulas} which shows that the "new" structure is also
qc-Einstein. The second step of the proof of Theorem \ref{main2}
follows then also by taking into account \cite[Theorem 1.2]{IMV}
where all locally 3-Sasakian structures of positive constant
qc-scalar curvature  which are qc-conformal to the standard
3-Sasakian structure on the sphere were classified (we note that
this classification  extends easily to the case when no sign
condition of the "new" qc-structure is assumed, see \cite{IV14}).

\end{document}